\newtheorem{theorem}{Theorem}[section]
\newtheorem{proposition}[theorem]{Proposition}
\newtheorem{lemma}[theorem]{Lemma}
\numberwithin{equation}{section}
\numberwithin{theorem}{section}
\newcommand{\mc}[1]{{\mathcal #1}}
\newcommand{\bb}[1]{{\mathbb #1}}
\newcommand{\upbar}[1]{\,\overline{\! #1}}
\newcommand{\id}{{1 \mskip -5mu {\rm I}}}
\renewcommand{\epsilon}{\varepsilon}   
\title[GC functional for diffusion processes]{Small noise asymptotic
  of the Gallavotti-Cohen functional for diffusion processes}
\author[L.\ Bertini]{Lorenzo Bertini} 
\address{Lorenzo Bertini
  \hfill\break \indent
  Dipartimento di Matematica, Universit\`a di Roma `La Sapienza'
  \hfill\break \indent
  P.le Aldo Moro 2, 00185 Roma, Italy} 
\email{bertini@mat.uniroma1.it}
\author[G.\ Di Ges\`u]{Giacomo Di Ges\`u} 
\address{Giacomo Di Ges\`{u} 
  \hfill\break \indent
  Institut f\"{u}r Mathematik, 
  Technische Universit\"{a}t Berlin,
  \hfill\break \indent
  Stra\ss e des 17. Juni 136, 10623 Berlin, Germany} 
\email{digesu@math.tu-berlin.de}
\begin{document}

\noindent\keywords{Large deviations, Gallavotti-Cohen functional,
  Freidlin-Wentzell estimates} 

\subjclass[2000]{Primary 60F10, 82C05; Secondary 60J60}

\begin{abstract}
  We consider, for a diffusion process in $\bb R^n$, the
  Gallavotti-Cohen functional, defined as the empirical power
  dissipated in a time interval by the non-conservative part of the
  drift.  
  We prove a large deviation principle in the limit in which the noise
  vanishes and the time interval diverges.  The corresponding rate
  functional, which satisfies the fluctuation theorem, is expressed in
  terms of a variational problem on the classical Freidlin-Wentzell
  functional.
\end{abstract}

\maketitle
\thispagestyle{empty}

\section{Introduction}
\label{s:0}

The so-called Gallavotti-Cohen functional and the associated
fluctuation theorem have been originally introduced in the context of
chaotic deterministic dynamical systems \cite{GC}. Subsequently, they
have been extended to stochastic systems, originally in \cite{Kur} and
in more generality in \cite{LS}.  Since then, this topic has become a
basic issue in nonequilibrium statistical mechanics and it has even
been the object of true experiments \cite{Cil}. On the other hand,
the rigorous mathematical analysis has not been developed in great
detail.

We here consider the simple setting of a diffusion process in $\bb R^n$ 
with constant diffusion coefficient, defined as the solution of the
stochastic differential equation
\begin{equation}
  \label{se}	
  d\xi_t = c(\xi_t)dt + \sqrt{\epsilon} \, d\beta_t
\end{equation}
where $\beta$ is a standard $n$-dimensional Brownian motion, $c$ is a
smooth vector field on $\bb R^n$, and $\epsilon>0$ is the diffusion
coefficient. 
Under suitable assumptions on the drift $c$, there exists a unique
stationary distribution $\mu_\epsilon$ having a strictly positive
smooth density $\varrho_\epsilon:\bb R^n\to (0,+\infty)$ 
with respect to Lebesgue measure.
We recall that the corresponding stationary process, obtained by
choosing in $\eqref{se}$ the initial condition with law $\mu_\epsilon$, 
is reversible if and only if the drift is conservative, that is 
$c =- (1/2) \nabla V$ for some $V:\bb R^n\to \bb R$. 
In this case $\varrho_\epsilon(x) = C_\epsilon \exp\{-\epsilon^{-1}V(x)\}$.
When $c$ is not conservative, the forward and backward time evolutions
of the stationary process have different laws, and the
stationary distribution is not - in general - explicitly known.

The Gallavotti-Cohen functional is a random variable on the canonical
path space $C(\bb R_+;\bb R^n)$ expressed in terms of the
Radon-Nikodym derivative of the law of the stationary process with
respect to its time reversal.  The standard definition is the
following.  Let $\mc P^\epsilon_{\mu_\epsilon}$, a probability measure on
$C\big(\bb R_+;\bb R^n \big)$, be the law of the stationary
process.  By stationarity, $\mc P^\epsilon_{\mu_\epsilon}$ can be extended to a
probability measure on $C\big( \bb R; \bb R^n \big)$.  Denoting with
$\theta$ the time reversal, we set $\mc P^{\epsilon,*}_{\mu_\epsilon}= 
\mc P^\epsilon_{\mu_\epsilon} \circ \theta^{-1}$.  
Given $T>0$, the Gallavotti-Cohen functional can then be defined as
the random variable on $C(\bb R_+;\bb R^n)$ which is $\mc
P^\epsilon_{\mu_\epsilon}$ a.s.\ given by
\begin{equation}
\label{wrn}
\widetilde W_T = -\frac {\epsilon}{T} \log 
\frac {d\mc P^{\epsilon,*}_{\mu_\epsilon,T}}{d\mc P^\epsilon_{\mu_\epsilon,T}}        
\end{equation}
where the subscript $T$ denotes the measures induced by the
restriction to the time interval $[0,T]$ and we used a normalization in
which $\widetilde W_T$ is finite as $\epsilon \to 0$ and
$T\to \infty$.  
Of course, $\mc P^{\epsilon,*}_{\mu_\epsilon}= \mc
P^\epsilon_{\mu_\epsilon}$, i.e. $\widetilde W_T\equiv 0$, if
and only if the stationary process is reversible.
By denoting with $\mc E^\epsilon_{\mu_\epsilon}$ the expectation with
respect to $\mc P^\epsilon_{\mu_\epsilon}$, from Jensen inequality it
follows $\mc E^\epsilon_{\mu_\epsilon}\big( \widetilde
W_T \big)\ge0$.  In fact, $\mc
E^\epsilon_{\mu_\epsilon}\big(\widetilde W_T\big)$ is
proportional to the relative entropy of $\mc
P^{\epsilon,*}_{\mu_\epsilon,T}$ with respect to $\mc
P^{\epsilon}_{\mu_\epsilon,T}$, thus providing a natural measure of
irreversibility.

By denoting with $X_t$ the canonical coordinates on $C(\bb R_+;\bb
R^n)$, an informal computation \cite{LS} based on Girsanov formula
shows that $\mc P^{\epsilon}_{\mu_\epsilon}$ a.s.\
\begin{equation}
\label{wst}
\widetilde W_T 
= \frac {2}{T} \int_0^T \langle c(X_t), \circ dX_t \rangle 
- \frac{\epsilon}{T} \log \frac{\varrho_\epsilon(X_T)}{\varrho_\epsilon(X_0)}
\end{equation}
where $\langle\cdot,\cdot\rangle$ is the inner product in $\bb R^n$
and $\circ dX_t$ denotes the Stratonovich integral with respect to
$X_t$.
Once again, note that $\widetilde W_T$ vanishes if and only
if $c=-(1/2)\nabla V$. By writing the entropy balance, the random
variable $\widetilde W_T$ can finally be interpreted as the
empirical production of the Gibbs entropy \cite{LS}.

The content of the so-called \emph{fluctuation theorem} is the
following. Fix the diffusion coefficient $\epsilon>0$ and look for the
large deviations asymptotic of the family of probability measures on
$\bb R$ given by  
$\{\mc P^\epsilon_{\mu_\epsilon}\circ (\widetilde W_T)^{-1}\}_{T>0}$
as $T\to \infty$. Suppose they satisfy a large deviation principle
that we informally write as
\begin{equation}
  \label{ldtw}
  \mc P^\epsilon_{\mu_\epsilon} \big( W_T \approx q \big) 
  \asymp \exp\big\{ - T R^\epsilon (q) \big\}  
\end{equation}
then the odd part of the rate function $R^\epsilon$ is linear. More
precisely, with our choice of the normalization, $R^\epsilon(q)
-R^\epsilon(-q) = -\epsilon^{-1} q$.  This means that ratio between
the probability of the events $\{W_T \approx q \}$ and
$\{W_T \approx - q \}$ becomes fixed, independently of the
model, in the limit $T\to \infty$.  
Provided \eqref{ldtw} holds, a simple argument based on the definition
\eqref{wrn} and time reversal shows that the rate function
$R^\epsilon$ satisfies the fluctuation theorem.  Indeed, from the very
definition of $\mc P^{\epsilon, *}_{\mu_\epsilon}$, the statement of
the fluctuation theorem can be written as a true identity even for
finite $T$, see again \cite{LS}.

On the other hand, in the present setting of a diffusion process with
non compact state space, i.e.\ $\bb R^n$, it is not really so clear
that \eqref{ldtw} holds. The main issue is the possible unboundedness
of $c$ and the necessary unboundedness of $\log \varrho_\epsilon$ in
the decomposition \eqref{wst}.  In the case of a compact state space,
the standard procedure is the following \cite{LS}. Modify the
definition of the Gallavotti-Cohen functional by dropping from
\eqref{wst} the boundary term
$\log\varrho_\epsilon(X_T)/\varrho_\epsilon(X_0)$, which should not
matter in the limit $T\to \infty$. In other words, let $\widehat
W_T$ be the random variable on $C(\bb R_+;\bb R^n)$ which is
$\mc P^\epsilon_{\mu_\epsilon}$ a.s.\ defined by
\begin{equation}
  \label{wsthat}
  \widehat W_T 
  = \frac {2}{T} \int_0^T \langle c(X_t), \circ dX_t \rangle \;,
\end{equation}
so that $\widehat W_T$ can be interpreted as the empirical power
dissipated by the vector field $c$ in the time interval $[0,T]$.

Using Girsanov theorem, Feynman-Kac formula, and Perron-Frobenius
theorem, see \cite{LS}, we informally deduce that for each
$\lambda\in\bb R$
\begin{equation*}
  \lim_{T\to\infty} \frac {1}{T} 
  \log \mc E^\epsilon_{\mu_\epsilon} \Big( 
  \exp\big\{ \lambda T W_T \big\} \Big)
  = e^\epsilon (\lambda)
\end{equation*}
where $e^\epsilon (\lambda)$ is the eigenvalue with maximal
real part (which is real) of the differential operator
$A_{\epsilon,\lambda}$ on $\bb R^n$ given by
\begin{equation}
  \label{lel}
  \begin{split}
  A_{\epsilon,\lambda} f (x) =& 
  \frac{\epsilon}{2} \Delta f (x) 
  + (1+2\lambda\epsilon) \big\langle c(x),\nabla f (x) \big\rangle  
  \\
  &
  + 2 \lambda(1+\lambda\epsilon)  
  \big\langle c(x), c(x) \big\rangle \, f(x)
  + \lambda \epsilon \nabla\cdot c(x) \, f(x)
  \end{split}
\end{equation}
where $\Delta f$ is the Laplacian of $f$, $\nabla f$ is the gradient
of $f$, and $\nabla\cdot c$ is the divergence of $c$. From H\"{o}lder
inequality if follows that $e^\epsilon$ is a convex function; provided
it is also smooth, by using the G\"{a}rtner-Ellis theorem, we deduce
that the sequence of probability measures $\{\mc
P^\epsilon_{\mu_\epsilon} \circ(\widehat W_T)^{-1}\}_{T>0}$ satisfies
a large deviation principle as $T\to \infty$ with convex rate function
given by the Legendre transform of $e^\epsilon$.  The fluctuation
theorem then follows from the identity
$e^\epsilon(\lambda)=e^\epsilon(-\epsilon^{-1}-\lambda)$ which can be
easily checked.  Since the definition of $\widetilde W^\epsilon_T$ in
\eqref{wst} involves explicitly the density $\varrho_\epsilon$, which
in general is not known, the definition of $\widehat W^\epsilon_T$ in
\eqref{wsthat} is more concrete and somewhat more appealing.

In the present setting of a non compact space state, if the vector
field $c$ is unbounded, as it is typically the case, the maximal
eigenvalue $e^\epsilon(\lambda)$ becomes infinite.  We therefore need
to follow a different route.  In this paper, we assume the vector
field $c$ admits the decomposition $c= - (1/2)\nabla V + b$ which is
orthogonal in the sense that for each $x\in \bb R^n$ we have $\langle
\nabla V(x), b(x) \rangle =0$. We also assume that the potential
$V(x)$ is super-linear as $|x|\to \infty$ and that $b$ is a
nonconservative smooth bounded vector field with bounded derivatives.
In this setting, we \emph{define} the Gallavotti-Cohen functional as
the random variable on $C(\bb R_+;\bb R^n)$ which is
$\mc P^\epsilon_{\mu_\epsilon}$ a.s.\ defined by
\begin{equation}
  \label{1:w}
  W_T 
  = \frac {2}{T} \int_0^T \langle b(X_t), \circ dX_t \rangle 
\end{equation}
namely, as the empirical power dissipated by the nonconservative part
of the vector field $c$. If the (deterministic) dynamical system $\dot
x = c(x)$ admits a unique equilibrium solution $O$ which is globally
attractive, by classical Freidlin-Wentzell results
\cite[Thm.~4.3.1]{FW}, we have $\varrho_\epsilon(x) \asymp \exp\{-
\epsilon^{-1} [V(x)-V(O)]\}$.  Therefore, definition \eqref{1:w} is close to
\eqref{wst} with the obvious advantage that we cancelled the
unboundedness of $\log\varrho_\epsilon$ with the conservative part of
the drift $c$.  In view of the assumptions on the nonconservative part
$b$ of the drift, as here shown, it is possible to carry out the
analysis outlined below \eqref{wsthat} and prove a large deviation
principle for the sequence $\{\mc P^\epsilon_{\mu_\epsilon}\circ
(W_T)^{-1} \}_{T>0}$. The corresponding rate function $R^\epsilon$,
which is strictly convex, satisfies the fluctuation theorem.

\bigskip 
Having clarified the definition of the Gallavotti-Cohen functional in
a non compact state space, we next discuss the main topic of the
present paper, which is the analysis of the large deviation properties
of the sequence $\{\mc P^\epsilon_{\mu_\epsilon}\circ (W_T)^{-1}\}$ in
the joint limit $\epsilon\to 0$ and $T\to \infty$. The motivation for
such analysis is the following. While the fluctuation theorem
establish a general, \emph{model independent}, symmetry property of
the rate function, the rate function itself, which appears to be
experimentally accessible, may encode other, \emph{model dependent},
relevant properties of the system which may be best revealed by the
small noise limit.  For instance, as here argued, even if for fixed
$\epsilon>0$ the rate function $R^\epsilon$ is strictly convex, it may
happen that phase transitions occur in the small noise limit.

As suggested in \cite{Kur2}, one possibility is to consider the limit
$\epsilon\to 0$ \emph{after} the limit $T\to \infty$ and discuss the
asymptotic of the rate function $R^\epsilon$. Since the latter is
obtained as the Legendre transform of the maximal eigenvalue of a
differential operator similar to $A_{\epsilon,\lambda}$ in
\eqref{lel}, this immediately becomes a problem in semiclassical
limits. We here instead look at the asymptotic in the opposite order
of the limits namely, we consider \emph{first} the small noise limit
$\epsilon\to 0$ and \emph{then} the long time limit $T\to \infty$. 
If the vector field $c$ has a unique globally attractive attractor, we
expect that the order in which these limits are taken does not
matter. However, as here explicitly shown, this is not the case 
if there are several attractors.
From a physical viewpoint, the relevant order of the limits depends on
the details of the experimental setting; the analysis here performed
is applicable as long as the noise is small and the power dissipated
by the drift is measured over a time scale which is much shorter than
the (possible) metastable time scales.  From a mathematical viewpoint,
the asymptotic here considered amounts to analyze the variational
convergence, as $T\to\infty$, of the sequence of rate functions
describing the large deviations asymptotic of the sequence $\{\mc
P^\epsilon_{\mu_\epsilon}\circ (W_T)^{-1} \}_{\epsilon>0}$ for a fixed
$T>0$.  We mention that this order of the asymptotics is analogous to
the one discussed in the context of the fluctuations of the empirical
current in stochastic lattice gases \cite{BDGJL8,Der}, where the small
noise limit corresponds to the limit of infinitely many particles.
 
Our results are informally stated as follows.  We show that the
sequence of probability measures $\{\mc P^\epsilon_{\mu_\epsilon} \circ
(W_T)^{-1}\}_{\epsilon>0,T>0}$ satisfies the large deviation principle
\begin{equation*}
  \mc P^\epsilon_{\mu_\epsilon} \big( W_T \approx q \big) 
  \asymp \exp\big\{ - T \epsilon^{-1} s (q) \big\}  
\end{equation*}
as we let first $\epsilon\to 0$ and then $T\to\infty$. The associated
convex rate function $s:\bb R\to\bb R_+$ is given by the following
variational problem on the classical Freidlin-Wentzell rate functional
\begin{equation*}
  s(q) = \inf_{T>0} 
  \inf \Big\{ \frac 1{2T} \int_0^T\!dt \, 
   \big|\dot X_t - c(X_t)\big|^2 \: ,\; X\,:\:
   X_0=X_T\,,\:
   \frac 2T \int_0^T\!dt \, \langle b(X_t), \dot X_t\rangle = q \Big\}
\end{equation*} 
and satisfies the fluctuation theorem $s(q)-s(-q)=-q$.  In words,
$s(q)$ is obtained by minimizing the averaged Freidlin-Wentzell
functional over all closed paths for which the power dissipated by the
vector field $b$ equals $q$.  We remark that, differently from what
happens in the classical problem of the exit from a domain, the cost
is measured here per unit of time and the constraint depends on the
whole path.  By constructing a simple example, we also show that the
function $s$ is not - in general - strictly convex or continuously
differentiable.

\section{Notation and results}
\label{s:1}

We denote by $\langle \cdot,\cdot\rangle$ the canonical inner product
in $\bb R^n$ and by $|\cdot|$ the associated Euclidean norm. 
The gradient and the divergence in $\bb R^n$ are respectively denoted
by $\nabla$  and $\nabla\cdot$.
Fix a function $V:\bb R^n \to \bb R$ and a vector
field $b:\bb R^n\to \bb R^n$. We assume they 
satisfy the following conditions:
\begin{itemize}
\item[(\textbf{A})]{$V\in C^2(\bb R^n)\quad$ and 
$\qquad \displaystyle{
 \lim_{|x|\rightarrow\infty}
\frac{\big\langle \nabla V(x), x\big\rangle}{|x|} = +\infty\;;
}$
} 
\item[(\textbf{B})]{the vector field $b$ is not conservative and  
    $b\in C^1_\mathrm{b}(\bb R^n;\bb R^n)$, namely $b$ is a continuously
    differentiable bounded vector field with bounded derivatives;}
\item[(\textbf{C})]{for each $x\in\bb R^n$  
$\qquad \displaystyle{
\langle \nabla V(x), b(x) \rangle = 0\;.
}$
}
\end{itemize}

Observe that the assumption (\textbf{A}) implies a
superlinear growth of the potential $V$, i.e.\
\begin{equation*}
  \lim_{|x|\rightarrow\infty}\frac{V(x)}{|x|} = +\infty\;;
\end{equation*}
in particular, $\int\!dx\, \exp\{-\epsilon^{-1}V(x)\} <\infty$. 
We shall denote by $c:\bb R^n\to\bb R^n$ the continuously
differentiable vector field defined by
\begin{equation}
  \label{c=}
  c(x) := - \frac 12 \nabla V(x) + b(x) \;.
\end{equation}

Fix a standard filtered probability space $(\Omega,\mc F, \mc F_t,\bb P)$ 
carrying an $n$-dimensional Brownian motion $\beta$.  The
expectation with respect to $\bb P$ is denoted by $\bb E$.  Given
$\epsilon>0$ and $x\in\bb R^n$ we consider the stochastic differential
equation
\begin{equation}
  \label{6.1}
  \begin{cases}
  d\xi^{x}_t = c(\xi^{x}_t) dt + \sqrt{\epsilon} \, d\beta_t \;,\\
  \xi^{x}_0 =x \;.
  \end{cases}
\end{equation}
Assumptions (\textbf{A}) and (\textbf{B}), together with definition
\eqref{c=}, yield
\begin{equation*}
\lim_{|x|\to \infty}\big\langle c(x), x\big\rangle = -\infty \;.
\end{equation*} 
By standard criteria, see e.g.\ Theorems III.4.1 and III.5.1 in
\cite{Has}, we deduce the existence and uniqueness of a unique
nonexploding strong solution to \eqref{6.1} as well as the existence
of a unique stationary probability measure $\mu_\epsilon$ for the
Markov family $\{\xi^x, x\in \bb R^n\}$.  We consider the canonical
path space $C(\bb R_+;\bb R^n)$ endowed with the topology of uniform
convergence in compacts and the associated Borel $\sigma$-algebra. The
canonical coordinate on $C(\bb R_+;\bb R^n)$ is denoted by $X_t$.  We
denote by $\mc P^\epsilon_x$, a probability measure on $C(\bb R_+;\bb
R^n)$, the law of the process $\{\xi^x_t, \, t\in\bb R_+\}$. 
Given a Borel probability measure $\nu$ on $\bb R^n$ we set $\mc
P^\epsilon_\nu :=\int\!d\nu(x) \mc P^\epsilon_x$ and denote by $\mc
E^\epsilon_\nu$ the corresponding expectation.  In particular, $\mc
P^\epsilon_{\mu_\epsilon}$ is the law of the stationary process
associated to \eqref{6.1}.

As discussed in the introduction, given $\epsilon,T>0$ and a Borel
probability measure $\nu$ on $\bb R^n$, we let the
\emph{Gallavotti-Cohen functional} $W_{T}$ be the real random variable
on $C(\bb R_+;\bb R^n)$ which is $\mc P^\epsilon_\nu$ a.s.\ defined by
\begin{equation}
  \label{6.2}
  W_{T} := \frac 2T \int_0^T 
  \!\big\langle b(X_t), \circ dX_t \big\rangle 
  = \frac 2T \int_0^T \langle b(X_t), dX_t\rangle
  + \frac{\epsilon}{T}\int_0^T \!\!dt \: \nabla\cdot b \,(X_t) 
  \;.
\end{equation}
Here, $\circ dX_t$ denotes the Stratonovich integral with respect to
the semimartingale $X_t$. In the second equality we rewrote $W_{T}$ in
terms of the Ito integral by using \eqref{6.1} and the orthogonality
condition (\textbf{C}).

The main purpose of this paper is to analyze the large deviations
behavior of the family of Borel probability measures on $\bb R$ given
by $\{\mc P^\epsilon_\nu \circ (W_{T})^{-1}\}_{\epsilon>0,T>0}$ in
the joint limit $\epsilon\to 0$ and $T\to \infty$. We first discuss
briefly the typical behavior.  Fix $\epsilon>0$ and the initial
distribution $\nu$.  By \eqref{6.2} and the ergodicity of the process
$\xi^x$, see e.g.\ \cite[Thm. IV.5.1]{Has}, in the limit $T\to\infty$
we have $\mc P^\epsilon_\nu$ a.s.
\begin{equation*}
  \lim_{T\to\infty} W_{T}  = \int\!d\mu_\epsilon (x) \,
    \big[ 2 \big| b(x)\big|^2 + \epsilon \, \nabla\cdot b (x) \big] 
    \;.
\end{equation*}
If the dynamical system $\dot x = c(x)$ admits a unique
equilibrium solution $O$ which is globally attractive, $\mu_\epsilon$
converges weakly to $\delta_O$ as $\epsilon\to 0$. Hence, as $b(O)=0$,
\begin{equation*}
  \lim_{\epsilon\to 0} \: \lim_{T\to\infty}   W_{T} = 0\;.
\end{equation*}
In the presence of metastable states, e.g.\ when $c$ has several
critical points (and no other attractors), the sequence
$\{\mu_\epsilon\}$ concentrates on the critical points of $c$
corresponding to the deepest minima of $V$ and the above statement
still holds. Observe that this limiting behavior is independent on the
initial distribution $\nu$.  On the other hand, if we denote by
$\upbar{x}_t$ the solution to $\dot{\upbar{x}} = c(\upbar{x})$ with
initial condition $\upbar{x}_0 =x$ and choose $\nu =\delta_x$, 
for $T>0$ fixed we have
\begin{equation*}
  \lim_{\epsilon\to 0} W_{T}  = 
  \frac 2T \int_0^T\!dt \: \big| b(\upbar{x}_t)\big|^2 \;,
\end{equation*}
where the limit is in probability with respect to $\mc P^\epsilon_x$.  
If the dynamical system $\dot x = c(x)$ admits a unique equilibrium
solution which is globally attractive, in the limit as $T\to \infty$
we obtain the same result as before. On the other hand, if $c$ has not
a unique attractor, there are initial conditions $x\in \bb R^n$ such
that the limiting behavior of $W_T$ is different.  Thus - in general
- the order in which the limits $\epsilon\to 0$ and $T\to \infty$ are
taken is relevant.

\medskip 
In view of definition \eqref{6.2} and our assumptions on $V$ and $b$,
for $\epsilon>0$ fixed the large deviation principle for the sequence
of probability measures $\{ \mc P^\epsilon_{\mu_\epsilon} \circ
(W_{T})^{-1}\}_{T>0}$ can be proven along the same lines outlined in
the introduction.

\begin{theorem}
  \label{t:fixeps}
  Fix $\epsilon>0$ and assume the function $V:\bb R^n\to \bb R$ also
  satisfies
  \begin{equation}
    \label{cV}
    \lim_{|x|\to\infty} 
    \Big[ \big| \nabla V (x) \big|^2 - 2 \Delta V (x) \Big] = +\infty   
    \;.
  \end{equation}
  For each $\lambda\in\bb R$ the limit 
  \begin{equation*}
    e^\epsilon(\lambda) := 
    \lim_{T\to\infty} \frac 1T \log \mc E^\epsilon_{\mu_\epsilon}
    \Big( \exp\big\{ \lambda T W_{T} \big\} \Big)
  \end{equation*}
  exists and defines a convex real analytic function $e^\epsilon:\bb R \to
  \bb R$.  Let $R^\epsilon$ be the Legendre transform of $e^\epsilon$,
  i.e.\ $R^\epsilon(q) := \sup_\lambda \big\{ \lambda q -
  e^\epsilon(\lambda) \big\}$.  Then the family of Borel probability
  measures on $\bb R$ given by $\{ \mc P^\epsilon_{\mu_\epsilon} \circ
  (W_T)^{-1} \}_{T>0}$ satisfies a large deviation principle with
  speed $T$ and essentially strictly convex rate function
  $R^\epsilon$. Namely, for each closed set $\mc C \subset \bb R$ and
  each open set $\mc O\subset \bb R$ we have
 \begin{equation*}
   \begin{split}
   & \varlimsup_{T\to\infty} \frac 1T \log 
  \mc P^\epsilon_{\mu_\epsilon} \big( W_{T} \in \mc C \big) 
  \le - \inf_{q\in \mc C} R^\epsilon(q) \;,
  \\
  & \varliminf_{T\to\infty} \frac 1T \log 
  \mc P^\epsilon_{\mu_\epsilon} \big( W_{T} \in \mc O \big) 
  \ge - \inf_{q\in \mc O} R^\epsilon(q) \;.
  \end{split}
 \end{equation*}
 Finally, $R^\epsilon$ satisfies the 
 fluctuation theorem $R^\epsilon(q)- R^\epsilon(-q) = -\epsilon^{-1} q$.
\end{theorem}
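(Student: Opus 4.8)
The plan is to follow the route sketched in the introduction for the compact case, but with the super-linear potential $V$ controlling the unboundedness of the state space, while the bounded vector field $b$ enters only through its $C^1_{\mathrm b}$ norm. The logarithmic moment generating function can be computed by a Girsanov--Feynman--Kac argument: writing $W_T$ via the Itô form in \eqref{6.2} and then using Girsanov to absorb the linear term $\tfrac2T\int_0^T\langle b(X_t),dX_t\rangle$, one obtains
\begin{equation*}
  \mc E^\epsilon_{\mu_\epsilon}\Big(\exp\{\lambda T W_T\}\Big)
  = \int\!d\mu_\epsilon(x)\; \mc E^\epsilon_x\Big(
  \exp\big\{\textstyle\int_0^T U_{\epsilon,\lambda}(X_t)\,dt\big\}\,
  g_{\epsilon,\lambda}(X_0,X_T)\Big)
\end{equation*}
for a suitable potential $U_{\epsilon,\lambda}$ and boundary factor $g_{\epsilon,\lambda}$; the point of assumptions (\textbf{B}) and (\textbf{C}) is that the Girsanov shift of the drift is by the \emph{bounded} field $2\lambda\epsilon\, b$, orthogonal to $\nabla V$, so the resulting potential $U_{\epsilon,\lambda}$ is bounded above (this is exactly where condition \eqref{cV} will be needed: together with $|b|$ bounded it guarantees that $U_{\epsilon,\lambda}(x)\to-\infty$ as $|x|\to\infty$, so the associated Schrödinger-type operator has discrete spectrum and a genuine principal eigenvalue). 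The first step, then, is to identify $U_{\epsilon,\lambda}$ explicitly (it will be a perturbation of $A_{\epsilon,\lambda}$ in \eqref{lel}, conjugated to self-adjoint form in $L^2(dx)$ or in the weighted space $L^2(\varrho_\epsilon\,dx)$) and to verify that $-\tfrac\epsilon2\Delta - U_{\epsilon,\lambda}$, or rather the generator of the tilted semigroup, is a Schrödinger operator with confining potential.

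The second step is spectral. Under \eqref{cV} the operator has compact resolvent, hence a discrete spectrum; by positivity of the semigroup kernel and a Perron--Frobenius / Krein--Rutman argument the top of the spectrum is a simple eigenvalue $e^\epsilon(\lambda)$ with a strictly positive eigenfunction $\phi_{\epsilon,\lambda}$. Real analyticity of $\lambda\mapsto e^\epsilon(\lambda)$ follows from analytic perturbation theory (Kato): the family $A_{\epsilon,\lambda}$ depends polynomially on $\lambda$, and a simple isolated eigenvalue depends analytically on the parameter. Using the spectral gap, one shows
\begin{equation*}
  \lim_{T\to\infty}\frac1T\log \mc E^\epsilon_x\Big(
  \exp\big\{\textstyle\int_0^T U_{\epsilon,\lambda}(X_t)\,dt\big\}\, g_{\epsilon,\lambda}(X_0,X_T)\Big)
  = e^\epsilon(\lambda)
\end{equation*}
uniformly enough in $x$ against $\mu_\epsilon$ that the boundary factor $g_{\epsilon,\lambda}$ and the initial average over $\mu_\epsilon$ do not affect the exponential rate; this requires integrability estimates on $g_{\epsilon,\lambda}(X_0,X_T)$ and on $\phi_{\epsilon,\lambda}^{\pm1}$ against $\mu_\epsilon$, which again come from the super-linear growth of $V$ and the boundedness of $b$. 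Convexity of $e^\epsilon$ is immediate from Hölder's inequality.

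The third step is to pass from the existence and smoothness of $e^\epsilon$ to the large deviation principle. Since $e^\epsilon$ is finite, convex, and differentiable on all of $\bb R$, the Gärtner--Ellis theorem yields the upper bound for all closed sets and the lower bound for all open sets, with rate function the Legendre transform $R^\epsilon=(e^\epsilon)^*$; essential strict convexity of $R^\epsilon$ is the dual formulation of differentiability (indeed real analyticity) of $e^\epsilon$. Finally, the fluctuation theorem $R^\epsilon(q)-R^\epsilon(-q)=-\epsilon^{-1}q$ reduces, by Legendre duality, to the symmetry $e^\epsilon(\lambda)=e^\epsilon(-\epsilon^{-1}-\lambda)$; this one checks directly at the level of the operator, exhibiting a conjugation (multiplication by a power of $\varrho_\epsilon$, equivalently by $\exp\{\epsilon^{-1}V\}$ up to constants) that intertwines $A_{\epsilon,\lambda}$ and $A_{\epsilon,-\epsilon^{-1}-\lambda}$ and hence preserves the principal eigenvalue — this is the infinitesimal version of the time-reversal identity for $\mc P^{\epsilon,*}_{\mu_\epsilon}$. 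The main obstacle I anticipate is not any single step in isolation but the non-compactness: making the Gärtner--Ellis input rigorous requires the operator-theoretic estimates (discreteness of spectrum, positivity and integrability of the principal eigenfunction, control of the boundary and initial terms uniformly in $T$) to be established with enough care that the limit is genuinely $e^\epsilon(\lambda)$ and not merely bounded by it; assumption \eqref{cV} is precisely the hypothesis that buys the confinement needed to close these estimates.
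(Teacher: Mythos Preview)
Your overall architecture---Girsanov/Feynman--Kac representation, spectral analysis of the tilted generator (compact resolvent, Perron--Frobenius, Kato perturbation for analyticity), then G\"artner--Ellis, with the fluctuation symmetry read off the operator---is exactly the paper's. But you have misplaced the source of confinement, and this matters for execution. After Girsanov the Feynman--Kac potential is
\[
  U_{\epsilon,\lambda}(x) = 2\lambda(1+\lambda\epsilon)\,|b(x)|^2 + \lambda\epsilon\,\nabla\!\cdot b(x),
\]
which by assumption (\textbf{B}) is \emph{bounded}, both above and below; it does not go to $-\infty$, and condition~\eqref{cV} plays no role in it. There is also no boundary factor $g_{\epsilon,\lambda}$: the representation is simply $\mc E^\epsilon_{\mu_\epsilon}\, e^{\lambda T W_T}=\int\!d\mu_\epsilon\,(T^\lambda_T 1)$ for the semigroup $T^\lambda_t$ with generator $A_\lambda$, so the integrability worries you anticipate for $g$ and for $\phi_{\epsilon,\lambda}^{\pm1}$ do not arise.

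The confinement enters through the \emph{drift}. Working in $\mc H=L^2(e^{-V/\epsilon}dx)$, the paper splits $A_\lambda=H_0+H_1$, where $H_0=\tfrac{\epsilon}{2}\Delta-\tfrac12\langle\nabla V,\nabla\,\cdot\,\rangle$ is the $\lambda$-independent Ornstein--Uhlenbeck-type part and $H_1$ is first order in $\nabla$ with bounded coefficients plus the bounded multiplication by $U_{\epsilon,\lambda}$. It is $H_0$ that, after the ground-state transformation $f\mapsto e^{-V/(2\epsilon)}f$, becomes a Schr\"odinger operator with potential proportional to $|\nabla V|^2-2\epsilon\Delta V$; condition~\eqref{cV} is used \emph{here}, to give $H_0$ compact resolvent. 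One then shows $H_1$ is $H_0$-bounded with relative bound $0$ (this is where boundedness of $b$ is used, via $\|\nabla f\|^2=-2(H_0f,f)$), so $A_\lambda$ inherits the compact resolvent and generates a compact holomorphic semigroup. For the fluctuation theorem the paper computes directly that the $\mc H$-adjoint of $A_\lambda$ equals $A_{-\epsilon^{-1}-\lambda}$ (a short calculation using~(\textbf{C})); this is essentially your conjugation, but note the relevant weight is $e^{-V/\epsilon}$, not the stationary density $\varrho_\epsilon$, which is not explicit when $b\neq 0$.
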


\noindent\emph{Remark.} 
The large deviation principle stated above still holds, with the same
rate function, if the stationary measure $\mu_\epsilon$ 
is replaced by a probability measure $\nu_\epsilon$ such that 
$\nu_\epsilon(dx)= f_\epsilon(x) dx$ and  $f_\epsilon:\bb R^n\to \bb R_+$ is a
probability density satisfying $\int\!dx \, f_\epsilon(x)^2
\exp\{\epsilon^{-1}V(x)\} <\infty$.

\medskip 
Our next result is the large deviations asymptotic for the family of
probabilities $\{\mc P^\epsilon_x\circ(W_{T})^{-1}\}_{\epsilon>0,T>0}$ 
as we let \emph{first} $\epsilon\to 0$ and \emph{then} $T\to\infty$.
As discussed in the introduction, if the limits are taken in this
order it is possible to express the rate function as the solution of a
suitable variational problem.  Before stating the result, we define
the relevant rate function.  Given $T>0$, let
\begin{equation*}
  H_T :=  \Big\{ \varphi\in AC\big([0,T];\bb R^n\big) \, :\; 
  \int_0^T\!dt \: \big| \dot{\varphi}_t \big|^2 < \infty \Big\}
\end{equation*}
where $AC\big([0,T];\bb R^n\big)$ denotes the set of absolutely
continuous paths from $[0,T]$ to $\bb R^n$. 
Furthermore, given $x\in\bb R^n$, we set 
\begin{equation*}
  H^x_T :=  \big\{ \varphi\in H_T \,:\: \varphi_0=x \big\}\;.
\end{equation*}
For $x\in \bb R^n$ and $T>0$ we let $I^x_T : C\big([0,T];\bb R^n\big)
\to [0,+\infty]$ be the Freidlin-Wentzell rate functional associated
to \eqref{6.1} namely, 
\begin{equation}
 \label{4.3}
 I_T^x(\varphi) := 
 \begin{cases}
  {\displaystyle 
    \frac 12 \int_0^T\!dt \: \big| \dot \varphi_t -c(\varphi_t) \big|^2  
  } & \textrm{ if $\varphi\in H^x_T$}  \;,\\
  +\infty  & \textrm{ otherwise}  .
 \end{cases} 
\end{equation}
For $T>0$ we let $\mc L_T :H_T \to \bb R$ be the
power dissipated by the nonconservative vector field $b$ namely,
\begin{equation}
  \label{pow}
  \mc L_T (\varphi) := \frac 2T \int_0^T \!dt \, \langle b(\varphi_t) ,
  \dot \varphi_t \rangle\;.
\end{equation}
For $x,y\in \bb R^n$ and $q\in\bb R$ we then introduce the following
subsets of $C\big([0,T];\bb R^n\big)$
\begin{equation}
\label{4.4}
\begin{split}
   \mc A^x_T(q) &:=  \big\{ \varphi \in H^x_{T} \,:\: 
   \mc L_T(\varphi) = q \big\} 
   \\
   \mc A^{xy}_T(q) &:=  \big\{ \varphi \in H^{x}_T   \,:\:
   \varphi_T=y\,,\: \mc L_T(\varphi) = q  \big\}  
\end{split}
\end{equation}
and set 
\begin{equation}
  \label{Sxy}
   S^{xy}_T(q) := 
   \inf \big\{ I_T^x(\varphi) \:,\: \varphi\in \mc A^{xy}_T(q) \big\} \;.
\end{equation}
Given $x\in\bb R^n$, we finally define the function $s^x: \bb R \to \bb R_+$ by 
\begin{equation}
  \label{4.5}
  s^x(q) := \inf_{T>0} \frac 1T S^{xx}_T(q)\;.
\end{equation}
We show below, see Theorem~\ref{t:4.1}, that the function $s^x$ is in
fact independent of $x$. In the sequel, we therefore denote it simply by $s$.

\begin{theorem}
\label{t:mr}
The family of Borel probability measures on $\bb R$ given by 
$\{\mc P^\epsilon_x \circ (W_T)^{-1} \}_{\epsilon>0,T>0}$ satisfies,
as we let first $\epsilon\to 0$ and then $T\to\infty$, a large
deviation principle, uniform with respect to $x$ in compact subsets of
$\bb R^n$, with speed $\epsilon^{-1} T$ and convex rate function $s$.
Namely, for each nonempty compact $K\subset \bb R^n$, each 
closed set $\mc C \subset \bb R$, and each open set
$\mc O \subset \bb R$ we have
  \begin{eqnarray*}
    && \varlimsup_{T\to\infty} \; \varlimsup_{\epsilon\to 0} 
    \; \sup_{x\in K} \:
    \frac {\epsilon}T \log 
    \mc P^\epsilon_x \big( W_{T} \in \mc C \big) 
    \le - \inf_{q\in \mc C} s (q) \;,
    \\
    && 
    \varliminf_{T\to\infty} \; \varliminf_{\epsilon\to 0} 
    \; \inf_{x\in K} \:
    \frac {\epsilon}T \log 
    \mc P^\epsilon_x \big( W_{T}  \in \mc O \big) 
    \ge - \inf_{q\in \mc O} s(q) \;.
  \end{eqnarray*}
 Finally, $s$ satisfies the fluctuation theorem $s(q)-s(-q) = -q$.
\end{theorem}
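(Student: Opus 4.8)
The plan is to split the proof into a small-noise step at fixed $T$ and a long-time step, and to obtain the fluctuation theorem by a direct time-reversal computation.

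\medskip
\textbf{Step 1 (fixed $T$, $\epsilon\to 0$).} First I would invoke the classical Freidlin--Wentzell principle. Assumptions (\textbf{A}), (\textbf{B}) and \eqref{c=} give $\langle c(x),x\rangle\to-\infty$ as $|x|\to\infty$, which yields the non-explosion of \eqref{6.1} and the exponential tightness on $C([0,T];\bb R^n)$ needed to run the estimates of \cite{FW}, so that $\{\mc P^\epsilon_x\}_{\epsilon>0}$ satisfies on $C([0,T];\bb R^n)$ a large deviation principle with speed $\epsilon^{-1}$ and good rate functional $I^x_T$ of \eqref{4.3}, uniformly for $x$ in compacts. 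I would then transport this principle through the map $X\mapsto W_T$ of \eqref{6.2}. The divergence term $\frac{\epsilon}{T}\int_0^T\nabla\!\cdot\! b\,(X_t)\,dt$ is $O(\epsilon)$ since $\nabla\!\cdot\! b$ is bounded, hence superexponentially negligible; the stochastic integral $\frac2T\int_0^T\langle b(X_t),dX_t\rangle$ is not a continuous path functional, so I would approximate it by the Riemann sums $\frac2T\sum_i\langle b(X_{t_i}),X_{t_{i+1}}-X_{t_i}\rangle$, which are continuous, which converge to $\mc L_T(\varphi)$ of \eqref{pow} along absolutely continuous $\varphi$, and which I expect to be superexponentially close to $W_T$, uniformly in $\epsilon$ and in $x$ on compacts, as the mesh is refined. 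Combining the contraction principle with this exponential equivalence yields, uniformly for $x$ in compacts, a large deviation principle for $\{\mc P^\epsilon_x\circ(W_T)^{-1}\}_{\epsilon>0}$ with rate functional $q\mapsto\inf\{I^x_T(\varphi):\varphi\in\mc A^x_T(q)\}=\inf_{y\in\bb R^n}S^{xy}_T(q)=:S^x_T(q)$; along the scale $\epsilon^{-1}T$ the rate functional is $\frac1T S^x_T$.

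\medskip
\textbf{Step 2 ($T\to\infty$).} The crucial observation is that concatenating two closed paths based at $x$, over $[0,T_1]$ and $[0,T_2]$, both with $\mc L$-value $q$, produces a closed path over $[0,T_1+T_2]$ with cost equal to the sum of the two costs and again $\mc L$-value $q$; hence $T\mapsto S^{xx}_T(q)$ is subadditive and, by Fekete's lemma, $\frac1T S^{xx}_T(q)\to\inf_{T>0}\frac1T S^{xx}_T(q)=s^x(q)$, with $\frac1T S^{xx}_T(q)\ge s^x(q)$ for every $T$. Gluing near-optimal loops of $\mc L$-values $q_1,q_2$ over horizons in the ratio of a prescribed rational weight $\alpha$, and letting the horizons diverge, gives $s^x(\alpha q_1+(1-\alpha)q_2)\le\alpha s^x(q_1)+(1-\alpha)s^x(q_2)$ on a dense set of $\alpha$; since $s^x$ is lower semicontinuous (an infimum of such), it is convex, hence continuous on the interior of its effective domain. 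Prepending and appending fixed connecting paths $y\to x$ and $x\to y$ — whose time and cost can be chosen bounded uniformly for $x,y$ in a compact — to a near-optimal loop at $x$ produces a loop at $y$ with cost raised by $O(1)$ and $\mc L$-value shifted by $O(1/T)$ (as $b$ is bounded); dividing by the horizon and using continuity of $s^x$ gives $s^x=s^y=:s$ on the interior of the domain, hence everywhere by lower semicontinuity, which is the claimed $x$-independence and yields \eqref{4.5}. The same closing-up of an almost-optimal path from $x$ to its free endpoint gives $S^x_T\le S^{xx}_T$ together with $\frac1T S^x_T(q)\ge s(q)-o(1)$, uniformly for $x$ in compacts. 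Assembling the two bounds with Step~1: for the upper bound, $\limsup_{\epsilon\to0}\sup_{x\in K}\frac{\epsilon}{T}\log\mc P^\epsilon_x(W_T\in\mc C)\le-\inf_{x\in K}\inf_{q\in\mc C}\frac1T S^x_T(q)$, whose $\limsup_T$ is $\le-\inf_{q\in\mc C}s(q)$ by the lower bound on $\frac1T S^x_T$; for the lower bound, pick $q\in\mc O$ with $s(q)$ close to $\inf_{\mc O}s$, a horizon $T_0$ with $\frac1{T_0}S^{xx}_{T_0}(q)$ close to $s(q)$ uniformly on $K$, iterate the corresponding loop $k$ times so that $\frac1{kT_0}S^{xx}_{kT_0}(q)\le\frac1{T_0}S^{xx}_{T_0}(q)$, and apply Step~1 at the horizon $kT_0$.

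\medskip
\textbf{Fluctuation theorem and main obstacle.} For $\varphi\in\mc A^{xx}_T(q)$ I would set $\hat\varphi_t:=\varphi_{T-t}$; a change of variables gives $\hat\varphi\in\mc A^{xx}_T(-q)$, and, using $c=-\tfrac12\nabla V+b$ and $\varphi_0=\varphi_T$,
\begin{equation*}
  I^x_T(\hat\varphi)-I^x_T(\varphi)
  =2\!\int_0^T\!\!dt\,\big\langle c(\varphi_t),\dot\varphi_t\big\rangle
  =-\big[V(\varphi_T)-V(\varphi_0)\big]+T\,\mc L_T(\varphi)=Tq \;.
\end{equation*}
Hence $\varphi\mapsto\hat\varphi$ is a bijection of $\mc A^{xx}_T(q)$ onto $\mc A^{xx}_T(-q)$ raising $I^x_T$ by $Tq$, so $S^{xx}_T(-q)=S^{xx}_T(q)+Tq$; dividing by $T$ and taking $\inf_{T>0}$ gives $s(-q)=s(q)+q$, i.e.\ $s(q)-s(-q)=-q$. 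I expect the main difficulty to lie in Step~1: running the Freidlin--Wentzell principle despite the unboundedness of $c$, and above all establishing the superexponential equivalence of $W_T$ with its Riemann-sum approximations so that the contraction principle reaches the non-continuous stochastic-integral functional. In Step~2 the interchange of the limits $\epsilon\to0$ and $T\to\infty$ rests entirely on the subadditivity of $S^{xx}_T(q)$ in $T$ and on the uniformity, over $x$ in compacts, of the connecting-path and closing-up estimates.
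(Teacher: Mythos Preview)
Your overall architecture matches the paper's: a fixed-$T$ small-noise LDP followed by a long-time variational step, with the fluctuation theorem obtained by time reversal of closed paths. The fluctuation-theorem computation is essentially identical to the paper's.

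\medskip
\textbf{Step 1.} Here you take a genuinely different route. The paper does \emph{not} prove superexponential equivalence of $W_T$ with Riemann-sum approximations; instead it adds $tW_t$ as an $(n{+}1)$-th coordinate, obtaining a degenerate diffusion on $\bb R^{n+1}$ with smooth coefficients, applies Azencott's extension of Freidlin--Wentzell to degenerate diffusion matrices, and then contracts via the continuous projection $(\varphi,\chi)\mapsto\chi_T/T$. This completely sidesteps the stochastic-integral issue and the unboundedness of $c$ in the contraction step. Your Riemann-sum route is plausible (since $b$ is bounded with bounded derivatives), but it is more work, and you correctly flag it as a difficulty; the paper's embedding trick is worth knowing.

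\medskip
\textbf{Step 2: the real gap.} You misidentify the main obstacle. The hard part is precisely your ``closing-up'' claim $\frac1T S^x_T(q)\ge s(q)-o(1)$. Your connecting-path argument is stated for $x,y$ \emph{in a compact}, but the free endpoint $y=\varphi^{T}_T$ of a near-minimizer of $S^x_T(q)$ is \emph{not} a priori in any fixed compact: nothing prevents $|y|$ from growing with $T$. A straight-line (or any bounded-time) return from such a $y$ to $x$ need not have cost $o(T)$, because $|c|$ is unbounded along it. The paper closes this gap with two nontrivial lemmata: first, using the action structure and the confining potential, it shows $|y|\le C(T+1)$; second, using assumption~(\textbf{A}) it shows that the \emph{deterministic} flow $\dot{\bar x}=c(\bar x)$ started from such a $y$ reaches a fixed compact in time $o(|y|)=o(T)$ at \emph{zero} Freidlin--Wentzell cost. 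Only then does one attach a bounded-cost segment (built as in the topological lemma) to hit $x$ with the correct $\mc L$-value. Without these two ingredients your $\Gamma$-liminf (equivalently, the large-deviation upper bound after the second limit) does not go through. Your upper-bound argument also implicitly uses this: you need $\inf_{x\in K}\inf_{q\in\mc C}\frac1T S^x_T(q)\to\inf_{\mc C}s$, and the inequality $\frac1T S^x_T\ge s-o(1)$ is exactly what requires the endpoint control.

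A minor additional point: your lower-semicontinuity claim for $s^x$ (``an infimum of such'') is not justified---an infimum of lsc functions is generally not lsc---so your passage from midpoint convexity on rationals to full convexity needs a different argument; the paper instead uses that $\frac1T S^{xx}_T(q)$ actually \emph{converges} to $s(q)$ (by subadditivity and finiteness from the topological lemma) and deduces convexity directly from concatenation.
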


\noindent\emph{Remark.} 
The above theorem, together with the \emph{exponential tightness} of the
family $\{\mu_\epsilon\}_{\epsilon>0}$, implies the large
deviation principle, with the same rate function, for the sequence 
$\{\mc P^\epsilon_{\mu_\epsilon} \circ (W_T)^{-1}\}_{\epsilon>0,T>0}$.

\medskip
The rest of the paper is organized as follows.
The proof of Theorem~\ref{t:fixeps} is detailed in
Section~\ref{s:appa}.  
In Section~\ref{s:stb} we discuss the large deviation principle for
the family $\{ \mc P^\epsilon_x \circ (W_T)^{-1} \}$ when $T$ is fixed
and $\epsilon\to 0$. By analyzing, in Section~\ref{s:ltb}, the
variational convergence of the associated rate functions as $T\to
\infty$ we then complete the proof of Theorem~\ref{t:mr}. 
Finally, in Section~\ref{s:ea} we give a simple example of a vector
field $c$ for which $s$ is not strictly convex.

\section{Large deviations principle as $T\to\infty$} 
\label{s:appa}

As the diffusion coefficient $\epsilon$ is here kept fixed, to
simplify the notation we set $\epsilon=1$ and drop the dependence from
$\epsilon$ from the notation. We also assume the arbitrary constant in
the definition of $V$ has been chosen so that $\exp\{-V(x)\}$ is a
probability density in $\bb R^n$ and denote by $\wp$ the probability
given by $d\wp(x)= \exp\{-V(x)\}\,dx$.  Let $\mc H$ be the
complex Hilbert space $\mc H=L^2(\bb R^n;d\wp)$; we denote by
$(\cdot,\cdot)$ and $\|\cdot\|$ the inner product and the norm in $\mc
H$.  Hereafter, we assume that the function $V:\bb R^n\to \bb R$
satisfies \eqref{cV} without further mention.

Fix a Borel probability measure $\nu$ on $\bb R^n$ and consider the
moment generating function of the random variable $tW_t$ with respect
to the probability $\mc P_\nu$.  
Recalling \eqref{6.2} and the assumption (\textbf{B}), a
straightforward application of the Girsanov formula yields the
following representation. For each $\lambda\in\bb R$
\begin{equation}
  \label{Girsanov}
  \mc E_\nu \,\exp\big\{ \lambda t W_{t} \big\} 
  = \int \!d\nu(x) \, 
  \bb E \, \exp \Big\{ \int_0^t\!ds \,
  U_\lambda\big(\xi^{\lambda,x}_s\big)
  \Big\}
\end{equation}
where, for $x\in\bb R^n$, the process $\xi^{\lambda,x}$ is the solution
to the stochastic differential equation
\begin{equation}
  \label{Xl}
  \begin{cases}
    \displaystyle{ 
      d \xi^{\lambda,x}_t = \Big[ -\frac12 \nabla V \big(\xi^{\lambda,x}_t\big)
      +(1+2\lambda) \, b\big(\xi^{\lambda,x}_t\big) \Big]dt + d\beta_t} 
    \\
     \xi^{\lambda,x}_0 =x
  \end{cases}
\end{equation}
and $U_\lambda :\bb R^n\to \bb R$ is the function defined by
\begin{equation}
  \label{Ul}
  U_\lambda (x) := 2 \lambda(1+\lambda) |b(x)|^2  
  +\lambda \nabla \cdot b(x) \;.
\end{equation}
 
For $\lambda\in\bb C$, let $A_\lambda^\circ$ be operator on $\mc H$
densely defined on $C^\infty_0(\bb R^n)$, the set of smooth functions
with compact support, by
\begin{equation}
  \label{al}
  A_\lambda^\circ f (x) := 
  \frac 12 \Delta f (x)
  - \frac 12 \langle \nabla V (x) , \nabla f(x) \rangle
  +  (1+2\lambda) \langle b(x), \nabla f(x) \rangle  + U_\lambda(x) f(x)
  \;.
\end{equation}
We denote by $A_\lambda$ the closure of $A_\lambda^\circ$; its domain  
$\mc D_\lambda$ is given by
\begin{equation*}
  \mc D_\lambda= \big\{f\in\mc H \,: \:  
  \exists \{f_n\}\subset C^\infty_0(\bb R^n) \text{ s.t. }
  f_n\to f \text{ in } \mc H \text{ and } A_\lambda^\circ f_n \text{ converges
    in } \mc H  \big\}\;.
\end{equation*}
The adjoint of $A_\lambda$ in $\mc{H}$ is denoted by $A_\lambda'$.

\begin{lemma}
  \label{t:ags}
  The set $\mc D_\lambda$ is independent on $\lambda$ and coincides
  with the domain of $A_\lambda'$ for any $\lambda\in \bb C$.
  Moreover, $A_\lambda$ generates a compact strongly continuous
  semigroup $T_t^\lambda$ on $\mc H$.  
  Finally, if $\lambda\in\bb R$ then the semigroup $T_t^\lambda$ is
  positive and irreducible and for $f\in\mc H$ the representation
  \begin{equation}
    \label{fk}
    T_t^\lambda f \, (x) = \bb E 
    \Big( \exp\Big\{\int_0^t\!ds \, U_\lambda(\xi^{\lambda,x}_s) \Big\}
    \, f\big(\xi^{\lambda,x}_t\big) \Big)
  \end{equation}
  holds.
\end{lemma}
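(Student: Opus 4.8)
The plan is to establish the three assertions of Lemma~\ref{t:ags} in the natural order, treating the operator $A_\lambda^\circ$ as a perturbation of the self-adjoint Ornstein--Uhlenbeck--type operator obtained when $\lambda=0$ and $b=0$.

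First I would identify the ``free'' operator $L^\circ f := \tfrac12\Delta f - \tfrac12\langle\nabla V,\nabla f\rangle$, which is symmetric on $C_0^\infty(\bb R^n)$ in $\mc H = L^2(\bb R^n; d\wp)$ since $d\wp = e^{-V}dx$ is its reversible measure. Under assumption \eqref{cV}, after the ground-state transformation $f \mapsto e^{-V/2} f$ the operator $L$ is unitarily equivalent to a Schr\"odinger operator $-\tfrac12\Delta + W$ with $W = \tfrac18|\nabla V|^2 - \tfrac14\Delta V \to +\infty$; hence $L$ is essentially self-adjoint on $C_0^\infty$, bounded above, with purely discrete spectrum and compact resolvent. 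This is the key input: it gives a core, a reference domain $\mc D := \mc D(L)$, and compactness. Then I would write $A_\lambda^\circ = L^\circ + B_\lambda^\circ$ with $B_\lambda^\circ f = (1+2\lambda)\langle b,\nabla f\rangle + U_\lambda f$. Since $b$ and $\nabla\cdot b$ are bounded (assumption (\textbf{B})) while $U_\lambda$ is bounded for each fixed $\lambda$, the term $U_\lambda f$ is a bounded multiplication operator, and $\langle b,\nabla f\rangle$ is controlled by $\|\nabla f\|$, which in turn is controlled by the quadratic form of $-L$: indeed $\|\nabla f\|^2 = (f,-2L f) + (f, \Delta V \cdot f)$-type identities (integration by parts against $e^{-V}$) show $\|\nabla f\| \le C(\|L f\| + \|f\|)$, so $B_\lambda$ is relatively bounded with respect to $L$ with relative bound $0$. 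By the Kato--Rellich / Hille--Yosida perturbation theory for generators, $A_\lambda := \overline{A_\lambda^\circ}$ is well-defined on the common domain $\mc D_\lambda = \mc D(L) = \mc D$, independent of $\lambda$, generates a strongly continuous semigroup, and the same argument applied to the formal adjoint (which differs only by lower-order bounded terms and a sign in the first-order coefficient) shows $\mc D(A_\lambda') = \mc D$ as well.

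Next, compactness of $T_t^\lambda$: since $A_\lambda = L + B_\lambda$ with $L$ having compact resolvent and $B_\lambda$ $L$-bounded with relative bound $0$, the resolvent $(\zeta - A_\lambda)^{-1} = (\zeta - L)^{-1}(\id - B_\lambda(\zeta-L)^{-1})^{-1}$ is compact for $\zeta$ large in the resolvent set, being a product of a compact operator and a bounded one; compactness of the resolvent then upgrades (via the standard argument, e.g. $T_t^\lambda = e^{t A_\lambda}$ is norm-continuous off a resolvent and hence compact when the resolvent is) to compactness of the semigroup for $t>0$. For real $\lambda$, positivity of $T_t^\lambda$ follows from the Feynman--Kac representation \eqref{fk}, which I would justify by noting that both sides solve the same parabolic Cauchy problem with generator $A_\lambda$ — the right-hand side does so by Ito's formula applied to $\exp\{\int_0^s U_\lambda(\xi_u^{\lambda,x})du\} f(\xi_{t-s}^{\lambda,x})$ together with the nonexplosion of $\xi^{\lambda,x}$ (guaranteed since the drift in \eqref{Xl} still satisfies a Hasminskii-type confinement bound, as $b$ is bounded and $-\tfrac12\nabla V$ dominates) — and a uniqueness argument identifies them. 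Since $U_\lambda$ is bounded below and $f\ge 0$ gives $T_t^\lambda f\ge 0$ pointwise a.e. Irreducibility follows because the diffusion $\xi^{\lambda,x}$ has a strictly positive transition density (nondegenerate diffusion with locally Lipschitz coefficients), so $T_t^\lambda f > 0$ a.e. whenever $f\ge 0$, $f\not\equiv 0$.

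The main obstacle I anticipate is the rigorous verification that $B_\lambda$ is $L$-bounded with relative bound zero with the correct handling of the first-order term $\langle b,\nabla f\rangle$ in the weighted space $\mc H$: one must be careful that the integration-by-parts identities producing $\|\nabla f\|^2$ in terms of $(f, -Lf)$ pick up exactly the $\Delta V$ term that \eqref{cV} is designed to control, and that this estimate is uniform enough (a form bound, then an operator bound via interpolation $\|\nabla f\|^2 \le \delta \|L f\|^2 + C_\delta \|f\|^2$) to run the Hille--Yosida perturbation argument rather than merely Kato--Rellich for self-adjoint operators, since $A_\lambda$ is non-self-adjoint for $b\neq 0$. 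A secondary technical point is checking that $A_\lambda^\circ$ and its formal adjoint both have $C_0^\infty(\bb R^n)$ as a core and that the closures' domains genuinely coincide with $\mc D(L)$; this uses that the perturbation is $L$-bounded with relative bound $<1$ so that the graph norms of $A_\lambda^\circ$ and $L^\circ$ are equivalent on $C_0^\infty$. Once these estimates are in place, the remaining assertions are routine consequences of standard semigroup and Markov-process theory.
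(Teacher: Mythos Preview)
Your approach is essentially the paper's: split $A_\lambda^\circ = H_0^\circ + H_1^\circ$ with $H_0^\circ$ the symmetric part $\tfrac12\Delta - \tfrac12\langle\nabla V,\nabla\rangle$, use the ground-state transform together with \eqref{cV} to get essential self-adjointness and compact resolvent for $H_0$, show $H_1$ is $H_0$-bounded with relative bound zero, and apply Kato perturbation theory. Two small corrections to your anticipated obstacles. First, the integration-by-parts identity in $\mc H$ is simply $(H_0 f,f) = -\tfrac12\|\nabla f\|^2$, with \emph{no} $\Delta V$ term; condition \eqref{cV} plays no role in the relative-bound estimate and enters only through the Schr\"odinger potential after the ground-state transform. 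Second, compact resolvent alone does not yield a compact semigroup: the paper notes that $H_0$ generates a holomorphic semigroup, that holomorphy is preserved under a relative-bound-zero perturbation, and that a holomorphic semigroup with compact resolvent is compact. For the adjoint domain the paper invokes a result of Beals to justify $(H_0+H_1)'=H_0+H_1'$, and for irreducibility it argues via the maximum principle applied to the resolvent rather than via a transition-density argument; your sketches of both points are acceptable alternatives.
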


Before proving this result, we clarify the terminology used.  The
semigroup $T^\lambda_t$ is \emph{positive} (positivity preserving in
the terminology of \cite{RS}) if $f\ge 0$ implies $T^\lambda_t f\ge 0$
for any $t\ge 0$.  Given $f\in\mc H$, we write $f>0$ iff $f\ge 0$ and
$f\neq 0$. We write $f\gg 0$ iff $f(x)> 0$ $\wp$ a.e.  Then, see
\cite[Def.~C-III.3.1]{Nagel}, the positive semigroup $T^\lambda_t$ is
\emph{irreducible} (ergodic in the terminology of \cite{RS}) if, given
$f>0$ and $g>0$ in $\mc H$, there exists $t_0\ge 0$ such that
$\big(g,T^\lambda_{t_0}f\big)>0$.  An equivalent characterization of
irreducibility is the following: for sufficiently large $z$ in the
resolvent set of the generator $A_\lambda$ the resolvent
$R_z=(z-A_\lambda)^{-1}$ is positivity improving, i.e.\ $f>0$ implies
$R_z f\gg 0$.

\begin{proof}
  Fix $\lambda\in \bb C$. On $C_0^\infty(\bb R^n)$ define the
  operators
  \begin{equation*}
    \begin{split}
      &H_0^\circ  f (x)= \frac 12 \Delta f (x)- \frac 12 
      \langle \nabla V (x), \nabla f(x)\rangle
      \\
      &H_{1}^\circ f(x) = (1+2\lambda) \, \langle b(x) , \nabla f(x)\rangle
      + U_\lambda(x) f(x)
    \end{split}
  \end{equation*}
  so that $A_\lambda^\circ=H_0^\circ + H_{1}^\circ$.

  \smallskip
  \noindent\emph{Step 1.}
  The operator $H_0^\circ$ with domain $C_0^\infty(\bb R^n)$ is
  essentially self-adjoint in $\mc H$ and its closure has compact
  resolvent.

  To complete this step, it is enough to consider the well-known
  \emph{ground state transformation} and apply standard criteria for
  Schr\"odinger operators. Here the details.  Let $U : \mc H \to
  L^2(\bb R^n;dx)$ be the isometry defined by $Uf = \exp\{- V/2\} f$.
  By an explicit computation
  \begin{equation*}
    U H_0^\circ U^{-1} = - \Big[ -\frac 12 \Delta + \frac 18 |\nabla V|^2
    -\frac 14 \Delta V \Big]
  \end{equation*}
  Recalling the assumption \eqref{cV}, the step now follows from
  \cite[Thm~X.29]{RS} and \cite[Thm~XIII.67]{RS}.
  We denote by $(H_0,\mc D)$ the closure of $H_0^\circ$.  
  Observe that
  \begin{equation}
    \label{H_0 negative}
    (H_0 f,f)=-\frac{1}{2}\|\nabla f\|^2
 \end{equation}
 which implies that $H_0$ is bounded from above by zero.

  \smallskip
  \noindent\emph{Step 2.}
  The operator $H_{1}^\circ$ is $H_0^\circ$-bounded with
  $H_0^\circ$-bound equal to $0$, i.e.\ for each $\gamma>0$ there
  exists a constant $C_\gamma$, also depending on $\lambda$, such that
  for any $f\in C^\infty_0(\bb R^n)$
  \begin{equation}
    \label{relativebound}
    \| H_{1}^\circ f\| \le \gamma \|H_0^\circ f\| + C_\gamma \|f\|
  \end{equation}
  Observe that this implies that $H_{1}^\circ$ can be uniquely extended to 
$\mc D$. We denote this extension by $(H_1,\mc D)$. 
  Moreover the bound \eqref{relativebound} 
  holds, with $H_0^\circ$ and $H_1^\circ$ replaced by $H_0$ and $H_1$, 
  for any $f\in \mc{D}$.  
  
  To prove \eqref{relativebound}, let $g\in\mc H$, $f\in
  C^\infty_0(\bb R^n)$. By assumption (\textbf{B}) and Cauchy-Schwarz
  inequality, for any $\gamma>0$ we have
  \begin{equation*} 
    \begin{split}
      \big( g,H_{1}^\circ f\big) 
      &\le (1+2|\lambda|) \int\!d\wp(x) \,
      |b(x)| \ |g(x)| \ |\nabla f(x)| + C \|g\| \, \|f\|
      \\
      &\le (1+2|\lambda|) \Big[ \frac{\gamma}{2} \| \nabla f \|^2 +
      \frac {1}{2\gamma} \, \sup_x |b(x)|^2 \, \|g\|^2 \Big] + C \|g\|
      \|f\|
    \end{split}
  \end{equation*}
  for some constant $C$ depending on $\lambda$ and the vector field
  $b$.  Thanks to \eqref{H_0 negative}, by taking the supremum over
  $g$ with $\|g\|=1$ and redefining $\gamma$, we deduce
  \begin{equation*}
  \| H_{1}^\circ f \| 
   \le  \gamma \|f\| \|H_0^\circ f\| + C_\gamma ( 1+\|f\|)
 \end{equation*}
 for some constant $C_\gamma\in(0,+\infty)$.  Replacing above $f$ with
 $f/\|f\|$ the step follows.

 \smallskip
 \noindent\emph{Step 3.}
 The operator $(H_{1},\mc D)$ is $(H_0,\mc{D})$-compact.
 
 We need to show that for some $z$ (hence for all $z$ in the resolvent
 set of $H_0$, which by \eqref{H_0 negative} is not empty) $H_{1}
 R_{0,z}$ is compact, where $R_{0,z}$ denotes the resolvent of the
 operator $(H_0,\mc D)$.  Fix a bounded sequence $\{f_n\}\subset \mc
 H$.  By Step~1 there exists a subsequence, still denoted by $f_n$,
 such that $\{R_{0,z}f_n\}$ converges. By linearity, we can assume
 that $R_{0,z}f_n \to 0$; we shall show that $H_{1} R_{0,z} f_n \to
 0$ as well. Indeed, by Step~2 for any $\gamma >0$ we have
 \begin{equation*}
   \| H_{1} R_{0,z} f_n\| \le \gamma \| H_0 R_z(H_0) f_n\| 
   + C_\gamma \| R_z(H_0) f_n\| \;.
 \end{equation*}
 Since for each $z$ in the resolvent set of $H_0$ the operator $H_0
 R_z(H_0)$ is bounded, the result follows by taking first the limit
 $n\to \infty$ and then $\gamma \downarrow 0$.

 \smallskip
 \noindent\emph{Step 4.}
 For every $\lambda\in \bb C$, $\mc D_\lambda=\mc{D}$ and $A_\lambda$
 generates a compact semigroup on $\mc H$.

 The first statement follows from \cite[Thm.~IV.1.11]{Kato}. Moreover,
 since by \eqref{H_0 negative} $H_0$ generates a strongly continuous
 holomorphic semigroup on $\mc H$, the same is true for $A_\lambda$ by
 \cite[Thm.~X.54]{RS} and Step~2.  One can easily check that for $z$
 sufficiently large the resolvent $R_z$ of $A_\lambda$ satisfies
 \begin{equation*}
   R_z = R_{0,z} + R_z H_1 R_{0,z}
 \end{equation*}
 hence, by Steps 1 and 3, the operator $A_\lambda$ has compact
 resolvent. Since a holomorphic strongly continuous semigroup whose
 generator has compact resolvent is always compact
 \cite[Thm.~II.4.29]{EN} this concludes the step.

 \smallskip
 \noindent\emph{Step 5.}
 The operator $A_\lambda'$ has domain $\mc D$.

 Observe that trivially the domain of $A_\lambda'$ contains $\mc D$.
 Define on $C_0^\infty$ the formal adjoint $H_1^*$ of $H_1$ in $\mc
 H$.  In view of assumption (\textbf{C}), an elementary computation
 shows that
 \begin{equation*}
   H_1^* f (x) = 
   -  (1+2\upbar{\lambda}) \langle b(x), \nabla f(x)\rangle
   + \upbar{U_\lambda(x)} f(x)
 \end{equation*}
 where $\upbar z$ is the complex conjugate of $z$.  Notice
 that Step~2 holds also with $H_1^\circ$ replaced by $H_1^*$. 
 As in Step~3, we can then conclude that the adjoint $H_1'$ of $H_1$
 is $(H_0,\mc D)$-compact.  By \cite{Beals} it follows that
 $A_\lambda'=(H_0,\mc D) + H_1$, so the domain of $A_\lambda'$ can not
 be larger than $\mc D$.

 \smallskip
 \noindent\emph{Step 6.}
 If $\lambda\in\bb R$, then the semigroup $T^\lambda_t := \exp\{ t
 A_\lambda\}$ is positive, irreducible and satisfies \eqref{fk}.

 In view of the assumptions (\textbf{B}), the function $U_\lambda$
 defined in \eqref{Ul} is continuous and bounded, hence \eqref{fk} for
 $f\in C^\infty_0(\bb R^n)$ follows from the classical Feynman-Kac
 formula. By monotone approximation it holds for every $f\in\mc H$. In
 particular $T^\lambda_t$ is positive. To prove irreducibility we show
 that the resolvent $R_z$ of $A_\lambda$ is positivity improving for
 $z$ sufficiently large.  Given $f\in\mc H$, $f>0$ we can find a
 function $g\in C^\infty_0(\bb R^n)$ such that $0<g \leq f$. For
 sufficiently large $z$ in the resolvent of $A_\lambda$, we have that
 $h:=R_z g\geq 0$ since the semigroup is positive. Furthermore
 $A_\lambda h -z h= -g <0$.  By elliptic regularity $h\in
 C^2(\bb{R}^n)$, so $h\neq 0$ and for large enough $z$ the maximum
 principle \cite[Thm.~II.6]{Protter} gives $h\gg0$.  Again by
 positivity of the semigroup $R_z f\geq R_z g = h\gg0$, which gives
 the irreducibility.
\end{proof}

Given $\lambda\in\bb R$, let $e(\lambda):=\sup\{ \textrm{Re\,}z,\,
z\in\textrm{spec\,} A_\lambda\}$ be the \emph{spectral bound} of the
operator $A_\lambda$. We recall, see \cite{Nagel}, that $e(\lambda)$ is
\emph{strictly dominant} iff there exists $\delta>0$ such that
$\textrm{Re\,}z\le e(\lambda)-\delta$ for any $z\in \textrm{spec\,}
A_\lambda\setminus\{e(\lambda)\}$. We recall also that an eigenvalue
is \emph{simple} iff it is a pole of order $1$ of the resolvent.

\begin{lemma}
  \label{t:spettr}
  Fix $\lambda\in\bb R$. Then $e(\lambda)$ is strictly dominant and
  a simple eigenvalue of $A_\lambda$ and of $A_\lambda'$. Moreover, there
  exist corresponding eigenvectors $\Psi_\lambda$ and $\Psi_\lambda'$
  of $A_\lambda$ and $A_\lambda'$ such that
  $\Psi_\lambda,\Psi_\lambda'\gg0$ and
  $(\Psi_\lambda,\Psi_\lambda')=1$.  
  In particular, the projection given by
  $P_\lambda=(\Psi'_\lambda,\cdot)\Psi_\lambda$ is positivity
  improving and there exist constants $\delta,C>0$ such that for any
  $t\ge 0$ 
  \begin{equation*} 
    \big\| T_t^\lambda - e^{t e(\lambda)} \, P_\lambda  \big\|
    \leq C\,  e ^{t[e(\lambda)-\delta]}
  \end{equation*}
  where $\|\cdot\|$ denotes the operator norm.
\end{lemma}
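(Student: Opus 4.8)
\smallskip\noindent\textit{Proof plan.}
The plan is to read off every assertion from the Perron--Frobenius theory of positive irreducible semigroups, using only that, by Lemma~\ref{t:ags}, for $\lambda\in\bb R$ the operator $A_\lambda$ has compact resolvent and generates a positive, irreducible, holomorphic strongly continuous semigroup $T_t^\lambda$ on $\mc H$.

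The first step is to check that $e(\lambda)=s(A_\lambda)$ is attained and is a pole of the resolvent. Since $A_\lambda$ has compact resolvent, $\mathrm{spec}\,A_\lambda$ is a nonempty discrete set of eigenvalues of finite algebraic multiplicity, each a pole of finite order; since moreover a holomorphic semigroup is immediately norm continuous, the spectral mapping theorem holds and, for every $c\in\bb R$, the set $\mathrm{spec}\,A_\lambda\cap\{\mathrm{Re}\,z\ge c\}$ is bounded \cite{EN}, hence finite by discreteness. In particular the supremum defining $e(\lambda)$ is attained and $e(\lambda)$ is an eigenvalue and a pole of $R(\cdot,A_\lambda)$. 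I would then invoke the structure theorem for irreducible positive $C_0$-semigroups whose spectral bound is a pole of the resolvent, see \cite{Nagel}: $e(\lambda)$ is a first order pole, $\ker\big(e(\lambda)-A_\lambda\big)$ is one-dimensional and spanned by a strictly positive eigenvector $\Psi_\lambda\gg0$, and the Riesz projection associated to $e(\lambda)$ has rank one and is strictly positive. In particular $e(\lambda)$ is a simple eigenvalue of $A_\lambda$.

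For the adjoint: since $A_\lambda$ has compact resolvent so does $A_\lambda'$, with $\mathrm{spec}\,A_\lambda'=\mathrm{spec}\,A_\lambda$ and with $e(\lambda)$ having the same (finite) algebraic multiplicity and pole order for $A_\lambda'$ as for $A_\lambda$; hence $e(\lambda)$ is a simple eigenvalue of $A_\lambda'$ as well. To choose its eigenvector $\Psi_\lambda'\gg0$ I would use that, by the characterization of irreducibility recalled after Lemma~\ref{t:ags}, the resolvent $R_z=(z-A_\lambda)^{-1}$ is positivity improving for $z$ large; a plain duality argument (testing $R_z^*g$ against indicator functions) then shows that $R_z^*=(z-A_\lambda')^{-1}$ is positivity improving too, so the adjoint semigroup is again positive and irreducible and the same structure theorem yields $\Psi_\lambda'\gg0$. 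Choosing $\Psi_\lambda,\Psi_\lambda'$ real-valued and positive, $(\Psi_\lambda,\Psi_\lambda')>0$ and we normalize it to $1$. With this normalization $P_\lambda=(\Psi'_\lambda,\cdot)\Psi_\lambda$ is precisely the Riesz projection of $e(\lambda)$ — it is a rank one projection with range $\bb C\,\Psi_\lambda=\ker(e(\lambda)-A_\lambda)$ and kernel the orthogonal complement of $\ker(e(\lambda)-A_\lambda')$ — and it is positivity improving since $f>0$ forces $(\Psi_\lambda',f)>0$, whence $P_\lambda f$ is a strictly positive multiple of $\Psi_\lambda$.

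The remaining point — strict dominance together with the exponential estimate — is the one I expect to require the most care. Strict dominance amounts to $\sigma_b:=\{z\in\mathrm{spec}\,A_\lambda:\mathrm{Re}\,z=e(\lambda)\}=\{e(\lambda)\}$. For an irreducible positive semigroup whose spectral bound is a pole of the resolvent, $\sigma_b-e(\lambda)$ is a closed additive subgroup of $i\bb R$ \cite{Nagel}; since $\mathrm{spec}\,A_\lambda$ is discrete and meets $\{\mathrm{Re}\,z=e(\lambda)\}$ in a finite set, this subgroup is trivial, so $\sigma_b=\{e(\lambda)\}$. Combining this with the finiteness of $\mathrm{spec}\,A_\lambda\cap\{\mathrm{Re}\,z\ge e(\lambda)-1\}$, the quantity $s_1:=\sup\{\mathrm{Re}\,z:z\in\mathrm{spec}\,A_\lambda\setminus\{e(\lambda)\}\}$ satisfies $s_1<e(\lambda)$; put $\delta:=\tfrac12(e(\lambda)-s_1)>0$. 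Finally I would decompose $\mc H=P_\lambda\mc H\oplus(I-P_\lambda)\mc H$ into $T_t^\lambda$-invariant subspaces: on $P_\lambda\mc H=\bb C\,\Psi_\lambda$ the semigroup acts as multiplication by $e^{te(\lambda)}$, while on $(I-P_\lambda)\mc H$ it restricts to a holomorphic semigroup whose generator has spectral bound $s_1=e(\lambda)-2\delta$; since spectral bound and growth bound coincide for holomorphic semigroups, this restriction satisfies $\|T_t^\lambda(I-P_\lambda)\|\le C\,e^{t(e(\lambda)-\delta)}$ for all $t\ge0$ with a suitable $C>0$. As $T_t^\lambda-e^{te(\lambda)}P_\lambda=T_t^\lambda(I-P_\lambda)$, the stated bound follows. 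The main obstacle is thus the triviality of the boundary spectrum $\sigma_b$, which rests on the cyclicity structure of the boundary spectrum of irreducible positive semigroups combined with the discreteness of $\mathrm{spec}\,A_\lambda$; everything else is a fairly standard Perron--Frobenius and holomorphic-semigroup package.
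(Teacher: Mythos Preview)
Your approach is essentially the same as the paper's: both derive everything from the Perron--Frobenius theory of positive irreducible semigroups, citing \cite{Nagel} and \cite{EN}. The paper's proof is a three-line citation of \cite[Thm.~C-III.1.1, Prop.~C-III.3.5]{Nagel} and \cite[Thm.~V.3.7]{EN}; you have unpacked what those references actually say, and your treatment of strict dominance via the cyclicity of the boundary spectrum together with discreteness, and of the exponential estimate via the Riesz decomposition plus ``spectral bound equals growth bound'' for holomorphic semigroups, is correct.

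There is one genuine gap. You assert that ``$\mathrm{spec}\,A_\lambda$ is a nonempty discrete set'' as if nonemptiness followed from compactness of the resolvent. It does not: an operator with compact resolvent can have empty spectrum (equivalently, a compact operator can have spectrum $\{0\}$). The paper flags this explicitly: nonemptiness is obtained from a theorem of de~Pagter \cite[Thm.~C-III.3.7]{Nagel}, which says that an irreducible positive compact operator on a Banach lattice of dimension $>1$ has strictly positive spectral radius. Applied to the resolvent $R_z$ (which is compact, positive, and positivity improving by Lemma~\ref{t:ags}), this forces $\mathrm{spec}\,R_z\neq\{0\}$, hence $\mathrm{spec}\,A_\lambda\neq\emptyset$. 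Without this step your argument that $e(\lambda)$ is attained, and everything downstream, is unsupported.
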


\begin{proof}
  The lemma follows from standard spectral theory of positive
  semigroups, see \cite[Thm.~C-III.1.1, Prop.~C-III.3.5]{Nagel} and
  \cite[Thm.~V.3.7]{EN}.  Notice that, by compactness and
  irreducibility of the semigroup, one can exclude that the spectrum
  is empty. This follows essentially from a result of de Pagter, see
  \cite[Thm.~C-III.3.7]{Nagel}.
\end{proof}

\noindent\emph{Remark.} 
If $\lambda=0$ the semigroup $T^\lambda_t$ is Markovian, i.e.\
$T^0_t1=1$. In this case $e(0)=0$ and $\Psi_0=1$.  It is also simple
to check that the stationary measure $\mu$ of the solution to
\eqref{6.1} for $\epsilon=1$, which coincides with \eqref{Xl} for
$\lambda=0$, is given by $d\mu=Z^{-1} \, \Psi'_0 d\wp$, where
$Z=\int\!d\wp(x) \, \Psi'_0(x)$ and $\Psi'_0$ is a strictly positive
eigenvector of $A_0'$ corresponding to the eigenvalue $e(0)=0$.

\begin{lemma}
  \label{t:anal} 
  The map $\bb R\ni \lambda\mapsto e(\lambda)$ is real analytic. 
\end{lemma}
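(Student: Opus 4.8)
The plan is to prove real analyticity of $\lambda\mapsto e(\lambda)$ by combining analytic perturbation theory for the family of operators $A_\lambda$ with the spectral isolation provided by Lemma~\ref{t:spettr}. The key observation is that, since $\mc D_\lambda=\mc D$ is independent of $\lambda$ (Lemma~\ref{t:ags}) and $A_\lambda=(H_0,\mc D)+H_1$ with $H_1=H_1(\lambda)$ depending on $\lambda$ only through the coefficients $(1+2\lambda)$ multiplying $\langle b,\nabla\cdot\rangle$ and $U_\lambda(x)=2\lambda(1+\lambda)|b(x)|^2+\lambda\nabla\cdot b(x)$, the map $\lambda\mapsto A_\lambda$ is polynomial, hence entire, in the sense of Kato. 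More precisely, I would first extend the domain consideration from $\lambda\in\bb R$ to a complex neighborhood: for $z\in\bb C$ set $A_z:=(H_0,\mc D)+H_1(z)$ with $H_1(z)f(x)=(1+2z)\langle b(x),\nabla f(x)\rangle+U_z(x)f(x)$, where $U_z(x)=2z(1+z)|b(x)|^2+z\nabla\cdot b(x)$. Step~2 of the proof of Lemma~\ref{t:ags} shows that $H_1(z)$ is $H_0$-bounded with $H_0$-bound $0$, with the constant $C_\gamma$ locally bounded in $z$; consequently, by \cite[Thm.~VII.2.6]{Kato} (analytic family of type~(A)), $\{A_z\}_{z\in\bb C}$ is an analytic family of type~(A) on all of $\bb C$.

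Next I would invoke the spectral structure. By Lemma~\ref{t:spettr}, for each $\lambda_0\in\bb R$ the spectral bound $e(\lambda_0)$ is a strictly dominant, simple, isolated eigenvalue of $A_{\lambda_0}$: there is $\delta>0$ such that $e(\lambda_0)$ is the only point of $\mathrm{spec}\,A_{\lambda_0}$ in the closed disc of radius $\delta$ about $e(\lambda_0)$. Choose a small circle $\Gamma$ around $e(\lambda_0)$ enclosing no other spectrum. By the standard stability theory for analytic families \cite[Thm.~VII.1.7, Thm.~VII.1.8]{Kato}, for $z$ in a complex neighborhood $\mc U$ of $\lambda_0$ the contour $\Gamma$ still separates the spectrum of $A_z$, the associated spectral (Riesz) projection
\begin{equation*}
  P_z := -\frac{1}{2\pi i}\oint_\Gamma (A_z-\zeta)^{-1}\,d\zeta
\end{equation*}
is well defined, has constant (finite) rank, and depends analytically on $z\in\mc U$; since $\mathrm{rank}\,P_{\lambda_0}=1$, we have $\mathrm{rank}\,P_z=1$ for all $z\in\mc U$. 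The eigenvalue inside $\Gamma$ is then given by the analytic function
\begin{equation*}
  \tilde e(z):=\mathrm{tr}\big(A_z P_z\big),
\end{equation*}
which is holomorphic on $\mc U$ (the trace of an analytic finite-rank operator-valued function) and satisfies $\tilde e(\lambda_0)=e(\lambda_0)$. Finally, for real $z=\lambda$ near $\lambda_0$ one must identify $\tilde e(\lambda)$ with $e(\lambda)$: for real $\lambda$ the semigroup $T^\lambda_t$ is positive and irreducible (Lemma~\ref{t:ags}), so by Lemma~\ref{t:spettr} the spectral bound $e(\lambda)$ is again strictly dominant and simple; by continuity of the spectrum (upper semicontinuity of $\mathrm{spec}$ and the gap stability), for $\lambda$ sufficiently close to $\lambda_0$ the strictly dominant eigenvalue must be the unique eigenvalue inside $\Gamma$, i.e.\ $e(\lambda)=\tilde e(\lambda)$. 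Hence $\lambda\mapsto e(\lambda)$ is real analytic in a neighborhood of $\lambda_0$, and since $\lambda_0\in\bb R$ was arbitrary, on all of $\bb R$.

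The main obstacle — and the point requiring the most care — is the last identification step: a priori the contour integral $\tilde e(z)$ only tracks \emph{some} eigenvalue(s) of $A_z$ lying near $e(\lambda_0)$, and one must rule out that for real $\lambda\ne\lambda_0$ the true spectral bound $e(\lambda)$ jumps outside $\Gamma$ while some other, non-dominant eigenvalue of $A_\lambda$ sits inside. This is where strict dominance from Lemma~\ref{t:spettr} is essential: it gives a uniform spectral gap that, together with the norm-continuity of the resolvent $z\mapsto(A_z-\zeta)^{-1}$ on $\Gamma$ (a consequence of the type-(A) property and the resolvent identity $R_z=R_{0,\zeta}+R_z H_1(z)R_{0,\zeta}$ with $H_1(z)R_{0,\zeta}$ compact and continuous in $z$), prevents such jumps on a small enough real neighborhood. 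Everything else is a routine application of Kato's perturbation theory; no genuinely hard estimate is needed beyond the relative bound already established in Step~2 of Lemma~\ref{t:ags}.
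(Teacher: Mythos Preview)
Your proposal is correct and follows essentially the same route as the paper: show that $\{A_\lambda\}_{\lambda\in\bb C}$ is a holomorphic family of type~(A) (using the $\lambda$-independence of $\mc D$ and the polynomial dependence of $H_1$ on $\lambda$), then invoke analytic perturbation theory for the simple isolated eigenvalue $e(\lambda_0)$ to obtain a holomorphic branch that coincides with $e(\lambda)$ for real $\lambda$. The paper packages the Riesz-projection argument by citing the Kato--Rellich theorem \cite[Thm.~XII.8]{RS} rather than writing out $P_z$ and $\tilde e(z)=\mathrm{tr}(A_zP_z)$, and it is somewhat terser on the identification step you discuss carefully; but the substance is identical.
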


\begin{proof}
  In view of Lemma~\ref{t:ags}, it is straightforward to check that
  the family of linear operators $\{A_\lambda\}_{\lambda\in\bb C} $ is
  a \emph{holomorphic family of type (A)} in the sense of
  \cite[VII~\S~2]{Kato}.  Fix $\lambda_0\in\bb R$. By the Kato-Rellich
  theorem \cite[Thm.~XII.8]{RS} there is an analytic function
  $\lambda\mapsto a(\lambda)\in \bb C$ such that
  $a(\lambda_0)=e(\lambda_0)$ and $a(\lambda)$ is the only eigenvalue
  of $A_\lambda$ near $e(\lambda)$. From the definition of
  $e(\lambda)$ and the analyticity of $a(\lambda)$ it easily follows
  that for $\lambda\in \bb R$ we have $a(\lambda)=e(\lambda)$, which
  proves the lemma.
\end{proof}

\begin{proof}[Proof of Theorem~\ref{t:fixeps}]
  The representation \eqref{Girsanov} and Lemma~\ref{t:ags} yield
  \begin{equation}
    \mc E_\mu \, \exp\big\{ \lambda t W_{t} \big\} 
    = \int \!d\mu(x) \, T_t^\lambda 1 \,(x)
  \end{equation}
  where $1$ denotes the function in $\mc H$ identically equal to one. 
  Using Lemma \ref{t:spettr} and the remark following it we get
  \begin{equation}
    \lim_{t\to\infty} 
    \frac{1}{t}\log \mc E_\mu \exp\big\{ \lambda t W_{t} \big\} 
    =  \lim_{t\to\infty} 
    \frac{1}{t}\log \big[ \ Z^{-1} \, \big(\Psi'_0  ,   T_t^\lambda 1
    \big)\ \big] 
    = e(\lambda) \;.
  \end{equation}

  An application of H\"older inequality yields the convexity of $e$.
  Moreover, in view of Lemma~\ref{t:anal}, the function $e$ is real
  analytic.  The large deviation principle for the family $\{\mc
  P_\mu\circ (W_T)^{-1}\}_{T>0}$ now follows from the G\"artner-Ellis
  theorem, see e.g.\ \cite[Thm.~2.3.6]{DZ}. Finally, by the smoothness
  of $e$ and \cite[Thm.~26.3]{Rock}, $R$ is essentially strictly
  convex.

  Recall that $A_\lambda'$ denotes the adjoint in $\mc H$ of the
  operator $A_\lambda$. By using assumption (\textbf{C}), an
  elementary computation shows that for $\lambda\in\bb R$ 
  the restriction to $C^\infty_0(\bb R^n)$ of the 
  operator $A'_\lambda$ is given  by
  \begin{equation*}
    \begin{split}
      A_\lambda' f(x) & = \frac 12 \Delta f(x) - \frac 12 \langle\nabla
      V(x), \nabla f (x)\rangle - (1+2\lambda) \langle b(x), \nabla
      f(x)\rangle 
      \\
      & + \big[ 2 \lambda(1+\lambda) |b(x)|^2 - (1+\lambda)
      \nabla \cdot b (x) \big] f(x)\;.
    \end{split}
  \end{equation*}
  In particular, by Lemma \ref{t:ags}, $A'_\lambda = A_{-1-\lambda}$.
  Lemma \ref{t:spettr} then gives $e(\lambda)=e(-1-\lambda)$ and the
  fluctuation theorem $R(q)-R(-q)=-q$ follows by taking Legendre
  transforms.
\end{proof}

\section{Small noise asymptotic for a fixed time interval}
\label{s:stb}

We start by proving the so-called \emph{exponential tightness} of the
family of probability measures $\{\mc P^\epsilon_x \circ
(W_{T})^{-1}\}_{\epsilon>0,T>0}$ as $\epsilon\to0$ and $T\to\infty$.
We emphasize that this result holds independently of the order of the
limits.

\begin{lemma}
  \label{t:expt}
  We have
  \begin{equation*}
    \lim_{\ell\to\infty}\; \varlimsup_{\substack{\epsilon\to 0 \\ T\to \infty}}
      \; \sup_{x\in \bb R^n}  \frac{\epsilon}{T} \log  
      \mc P^\epsilon_x  \big(  | W_{T}| > \ell \big) =- \infty \;.
  \end{equation*}
\end{lemma}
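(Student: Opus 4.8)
The plan is to split $W_T$ into a bounded finite-variation part and a stochastic integral, the latter controlled uniformly in the starting point by the exponential martingale inequality. What makes this work — and makes the bound uniform in $x\in\bb R^n$ — is the cancellation $\langle b(y),c(y)\rangle = |b(y)|^2$, which follows from \eqref{c=} and the orthogonality condition (\textbf{C}); together with the boundedness of $b$ and $\nabla\!\cdot b$ granted by (\textbf{B}), it keeps the finite-variation contribution to $W_T$ bounded irrespective of how the diffusion wanders at large $|x|$.

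First I would rewrite $W_T$ under $\mc P^\epsilon_x$. On the canonical path space the coordinate process satisfies, under $\mc P^\epsilon_x$, the identity $X_t = x + \int_0^t c(X_s)\,ds + \sqrt\epsilon\,B_t$ for a standard Brownian motion $B$, so the Itô representation of $W_T$ in \eqref{6.2} becomes
\[
  W_T = \frac 2T \int_0^T\!dt\,\big|b(X_t)\big|^2
  + \frac{2\sqrt\epsilon}{T}\, N_T
  + \frac{\epsilon}{T}\int_0^T\!dt\,\nabla\!\cdot b(X_t),
\]
where $N_t := \int_0^t \langle b(X_s), dB_s\rangle$ is a continuous $\mc P^\epsilon_x$-martingale with $\langle N\rangle_t = \int_0^t |b(X_s)|^2\,ds$. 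Setting $\kappa := \sup_y |b(y)|^2$ and $K := \sup_y |\nabla\!\cdot b(y)|$, both finite by (\textbf{B}), the first term lies in $[0,2\kappa]$, the last is bounded in absolute value by $\epsilon K$, and $\langle N\rangle_T \le \kappa T$ almost surely. Hence $|W_T| \le 2\kappa + \epsilon K + \tfrac{2\sqrt\epsilon}{T}|N_T|$, so that as soon as $\ell > 2\kappa + 1$ and $\epsilon K \le 1$ one has the inclusion $\{|W_T| > \ell\} \subset \{|N_T| > T(\ell - 2\kappa - 1)/(2\sqrt\epsilon)\}$.

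Second I would invoke the standard exponential estimate for continuous martingales: since $\langle N\rangle_T \le \kappa T$ a.s., the process $\exp\{\theta N_t - \tfrac12\theta^2\langle N\rangle_t\}$ is a nonnegative $\mc P^\epsilon_x$-supermartingale for every $\theta\in\bb R$, whence $\mc P^\epsilon_x(|N_T| \ge a) \le 2\exp\{-a^2/(2\kappa T)\}$ for all $a>0$. Choosing $a = T(\ell - 2\kappa - 1)/(2\sqrt\epsilon)$ yields, uniformly in $x$,
\[
  \frac{\epsilon}{T}\log \mc P^\epsilon_x\big(|W_T| > \ell\big)
  \;\le\; \frac{\epsilon}{T}\log 2 \;-\; \frac{(\ell - 2\kappa - 1)^2}{8\kappa}.
\]
Taking $\varlimsup$ as $\epsilon\to 0$ and $T\to\infty$ kills the first term and the bound no longer depends on $x$, so the $\sup_x$ is immaterial; letting $\ell\to\infty$ then produces $-\infty$, which is the assertion of the lemma.

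I do not foresee a real obstacle. The only mildly delicate points are bookkeeping: the reduction requires $\ell$ large and $\epsilon$ small, but since $\ell$ is sent to infinity only after the joint $\varlimsup$, these restrictions are costless. The whole content is the identity $\langle b,c\rangle = |b|^2$, which makes the drift part of $W_T$ a priori bounded, together with the textbook Bernstein bound for stochastic integrals against Brownian motion with uniformly bounded integrand.
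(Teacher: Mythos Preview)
Your proof is correct and follows essentially the same route as the paper: decompose $W_T$ into a bounded finite-variation part (via the identity $\langle b,c\rangle=|b|^2$ from (\textbf{C})) plus a martingale with quadratic variation bounded by $\kappa T$, then apply the Bernstein exponential inequality. The only cosmetic difference is that the paper works with the martingale $M_t=\int_0^t\langle b(X_s),\,dX_s-c(X_s)\,ds\rangle=\sqrt\epsilon\,N_t$ rather than your $N_t$, which amounts to absorbing the factor $\sqrt\epsilon$ into the martingale instead of carrying it outside.
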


\begin{proof}
  We denote by $M$ the martingale part in the representation of $W_T$
  given in \eqref{6.2}, i.e.\
  \begin{equation*}
    M_{t}:= 
    \int_0^t \!  \big\langle b(X_s),  dX_s  - c(X_s) ds \big\rangle     
    = \int_0^t \!  \big\langle b(X_s),  dX_s  - b(X_s) ds \big\rangle     
  \end{equation*}
  which is indeed a martingale with respect to the probability 
  $\mc P^\epsilon_x$. 
  In view of assumption (\textbf{B}), $|b|^2$ and $\nabla \cdot b$ are
  bounded real functions.  It is thus enough to show
  \begin{equation*}
    \lim_{\ell\to\infty}\varlimsup_{\substack{\epsilon\to 0 \\ T\to \infty}}
    \; \sup_{x\in \bb R^n}
     \frac{\epsilon}{T} \log  
      \mc P^\epsilon_x \big( | M_{T} | > \ell \, T   \big) =- \infty \;.
  \end{equation*}
  Again by the boundedness of $b$, the process $M$ is a continuous
  martingale whose quadratic variation admits the bound 
  $[M]_t \le \epsilon B t$ with $B:= \sup_x |b(x)|^2$.  By
  applying the so-called Bernstein exponential inequality for
  martingales, see e.g.\ \cite[IV.3.16]{RY}, we get
  \begin{equation*}
    \mc P^\epsilon_x \Big( | M_{T} | >  \ell \, T \big) 
      \le 2 \exp\Big\{ -\frac{\ell^2 \,T}{2\,\epsilon B} \Big\}
  \end{equation*}
  which concludes the proof.
\end{proof}

In the remaining part of this section we prove the large deviation
principle for the family 
$\{\mc P^\epsilon_x\circ(W_{T})^{-1}\}_{\epsilon>0}$ for a fixed $T>0$.

\begin{theorem}
  \label{t:fixT}
  Fix $x\in \bb R^n$ and $T>0$. Recall \eqref{4.4}, \eqref{Sxy} and 
  let $S^x_T : \bb R\to [0,+\infty]$ be defined by 
  \begin{equation}
    \label{Sx}
    S^x_T (q) := \inf_{y\in \bb R^n} \, S^{xy}_T(q) 
    = \inf 
    \big\{ I_T^x(\varphi) \:,\; \varphi\in \mc A^{x}_T(q) \big\} \;.
  \end{equation}
  Let  $\{x_\epsilon\}\subset \bb R^n$ be a sequence converging to
  $x$. Then the family of Borel probability measures on $\bb R$ given
  by $\{\mc P^\epsilon_{x_\epsilon} \circ (W_{T})^{-1}\}_{\epsilon>0}$
  satisfies a large deviation principle with speed $\epsilon^{-1}$ and
  rate function $S^x_T$, i.e.\ for each closed set $\mc C \subset \bb
  R$ and each open set $\mc O \subset \bb R$ we have
  \begin{equation*}
    \begin{split}
      & \varlimsup_{\epsilon\to 0} \; {\epsilon} 
      \log \mc P^\epsilon_{x_\epsilon} \big( W_{T} \in \mc C \big) 
      \le - \inf_{q\in \mc C} S^x_T (q)
      \;, \\
      & \varliminf_{\epsilon\to 0} \; {\epsilon} 
      \log \mc P^\epsilon_{x_\epsilon} \big( W_{T} \in \mc O \big)
      \ge - \inf_{q\in \mc O} S^x_T(q) \;.
    \end{split}
  \end{equation*}
\end{theorem}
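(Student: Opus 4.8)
The plan is to obtain the large deviation principle for $\{\mc P^\epsilon_{x_\epsilon}\circ(W_T)^{-1}\}_\epsilon$ by contraction from the classical Freidlin-Wentzell large deviation principle for the diffusion paths $\{\xi^x_t,\,t\in[0,T]\}$ in $C([0,T];\bb R^n)$. Recall that, under assumptions (\textbf{A})--(\textbf{B}), the laws $\{\mc P^\epsilon_{x_\epsilon}\big|_{[0,T]}\}_\epsilon$ satisfy a large deviation principle on $C([0,T];\bb R^n)$ with speed $\epsilon^{-1}$ and good rate function $I^x_T$ given in \eqref{4.3}; the fact that the initial points $x_\epsilon$ move but converge to $x$ is harmless because the Freidlin-Wentzell estimates are uniform for initial conditions in compact sets (see \cite[Thm.~3.1.2 and Cor.~5.6.15]{DZ}, or \cite[Ch.~3]{FW}). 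The superlinear confinement from (\textbf{A}) guarantees exponential tightness of the path measures, so the rate function is good and we need not worry about boundary effects at infinity on a fixed time interval.

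First I would make precise the map that sends a path to the value of the Gallavotti-Cohen functional. From \eqref{6.2}, $W_T$ decomposes as $\frac{2}{T}\int_0^T\langle b(X_t),dX_t\rangle + \frac{\epsilon}{T}\int_0^T\!dt\,\nabla\!\cdot b(X_t)$. Since $\nabla\!\cdot b$ is bounded by (\textbf{B}), the second term is $O(\epsilon)$ uniformly and is superexponentially negligible on the scale $\epsilon^{-1}$, so it does not affect the large deviation principle; it suffices to prove the large deviation principle for $\frac{2}{T}\int_0^T\langle b(X_t),dX_t\rangle$. On the set $H_T$ this Stratonovich/Ito integral coincides (the divergence correction being again negligible, or simply using that for absolutely continuous $\varphi$ the two integrals agree since the quadratic variation vanishes) with the continuous bounded functional $\mc L_T(\varphi)=\frac{2}{T}\int_0^T\langle b(\varphi_t),\dot\varphi_t\rangle\,dt$ defined in \eqref{pow}. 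The subtlety is that $\varphi\mapsto\int_0^T\langle b(\varphi_t),\dot\varphi_t\rangle\,dt$ is \emph{not} continuous on all of $C([0,T];\bb R^n)$, only on paths of bounded variation, so one cannot apply the contraction principle directly to a continuous functional on the whole path space.

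I would handle this by the standard device of approximate continuity along sequences of bounded rate. The key point is that if $\varphi^\epsilon\to\varphi$ uniformly \emph{and} $\sup_\epsilon I^x_T(\varphi^\epsilon)<\infty$, then $\varphi\in H^x_T$, the velocities $\dot\varphi^\epsilon$ converge weakly in $L^2([0,T];\bb R^n)$ to $\dot\varphi$ (this follows from weak compactness and the lower semicontinuity of $I^x_T$), and since $b(\varphi^\epsilon_t)\to b(\varphi_t)$ strongly in $L^2$ by continuity and boundedness of $b$, the product converges: $\mc L_T(\varphi^\epsilon)\to\mc L_T(\varphi)$. With this ``restricted continuity'' in hand, together with the exponential tightness already available from Lemma~\ref{t:expt} (or from the goodness of $I^x_T$), one gets both bounds by the usual recipe: for the upper bound on a closed set $\mc C$, cover paths with high $I^x_T$ by the tightness estimate and use restricted continuity on the complement; for the lower bound on an open set $\mc O$, given $q\in\mc O$ with $S^x_T(q)<\infty$ pick a near-optimal $\varphi\in\mc A^x_T(q)$, observe that by continuity of $\mc L_T$ along bounded-rate sequences a uniform neighborhood of $\varphi$ maps into a neighborhood of $q$ inside $\mc O$ up to the $O(\epsilon)$ divergence correction, and apply the path-space lower bound. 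Equivalently, one can invoke an inverse contraction / approximation argument: approximate $b$ uniformly by smooth compactly supported $b^{(k)}$, for which $\int_0^T\langle b^{(k)}(X_t),dX_t\rangle$ can be written after integration by parts (Ito's formula applied to a primitive-type expression) as a genuinely continuous functional of the path, apply the contraction principle at fixed $k$, and then let $k\to\infty$ using that $\|b^{(k)}-b\|_\infty\to0$ makes the error superexponentially small; a routine check identifies the resulting rate function with $S^x_T(q)=\inf\{I^x_T(\varphi):\varphi\in\mc A^x_T(q)\}$.

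The main obstacle, as indicated, is precisely the lack of continuity of the stochastic-integral functional on the full path space; everything else (confinement, moving initial point, the $O(\epsilon)$ divergence term) is routine given the cited Freidlin-Wentzell theory. I would therefore devote the bulk of the argument to establishing the restricted-continuity lemma for $\mc L_T$ along sequences of bounded Freidlin-Wentzell cost and to verifying that the contraction then yields exactly $S^x_T$, including that $S^x_T$ is lower semicontinuous and that its sublevel sets are compact (which follows from goodness of $I^x_T$ and restricted continuity of $\mc L_T$). One should also note that $S^x_T(q)$ may be $+\infty$ for $q$ outside the range of $\mc L_T$ on $H^x_T$; this is consistent, and the two large deviation bounds hold trivially there.
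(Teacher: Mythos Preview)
Your overall strategy---contract from the Freidlin--Wentzell path-space LDP---is sound, but the paper takes a different and cleaner route that sidesteps the obstacle you identify. Rather than wrestling with the discontinuity of the stochastic integral on $C([0,T];\bb R^n)$, the paper \emph{enlarges the state space}: it couples the original diffusion with the running integral $tW_t$, obtaining an $(n{+}1)$-dimensional degenerate diffusion $\eta^x=(X,\,tW_t)$ with explicit drift and (singular) diffusion matrix. It then applies a Freidlin--Wentzell LDP valid for degenerate diffusions with $\epsilon$-dependent drift (Azencott \cite[Thm.~III.2.13]{Azencott}) to the laws of $\eta^x$ on $C([0,T];\bb R^{n+1})$, and contracts via the \emph{genuinely continuous} evaluation map $(\varphi,\chi)\mapsto \chi_T/T$. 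The constraint $\dot\psi-c_0(\psi)\in(\mathrm{Ker}\,a)^\perp$ built into the degenerate rate functional forces $\chi_t=t\,\mc L_t(\varphi)$, and the quadratic form on that subspace reduces to $|\dot\varphi-c(\varphi)|^2$, so the contracted rate is exactly $S^x_T$. No approximate-continuity machinery is needed.

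Your sketch, by contrast, has two soft spots. First, the restricted-continuity lemma for $\mc L_T$ along bounded-$I^x_T$ sequences is correct, but it is a statement about \emph{deterministic} paths; it does not by itself control the random variable $W_T$ on a uniform neighbourhood of a fixed $\varphi$, since such a neighbourhood has full $\mc P^\epsilon$-measure on paths where $\mc L_T$ is undefined. To close the lower bound you must show that on the event $\{\sup_t|X_t-\varphi_t|<\delta\}$ the stochastic integral $\int_0^T\langle b(X_t),dX_t\rangle$ is close to $\int_0^T\langle b(\varphi_t),\dot\varphi_t\rangle\,dt$ except on a set of probability $o(e^{-C/\epsilon})$; this is doable (split off $b(\varphi_t)$, integrate by parts in $t$ since $t\mapsto b(\varphi_t)$ has bounded variation, and Bernstein-bound the remaining martingale as in Lemma~\ref{t:expt}), but it is real work you have not written. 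The clean formulation is via exponentially good approximations \cite[Thm.~4.2.23]{DZ}, which requires an explicit sequence of continuous functionals, not just continuity on level sets. Second, your alternative route---rewrite $\int\langle b^{(k)}(X_t),dX_t\rangle$ as a continuous path functional via ``integration by parts / It\^o's formula applied to a primitive''---fails as stated: $b$ is assumed \emph{non}conservative, so there is no scalar primitive $F$ with $b=\nabla F$ to which It\^o's formula applies, and compact support of $b^{(k)}$ does not change this. The usual repair is time-discretized Riemann sums, not spatial primitives. The paper's enlargement trick bypasses all of this at the modest price of invoking a degenerate-diffusion LDP.
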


Since the random variable $W_T$ is not a continuous function on the
path space $C(\bb R_+;\bb R^n)$, this result cannot be obtained
directly from the usual Freidlin-Wentzell estimates \cite{FW} by
contraction principle and some work is needed.  We proceed by adding
$W_{t}$ as a $n+1$-th coordinate to the underlying diffusion process
$\xi^{x}_t$. We then exploit the Freidlin-Wentzell theory for this
extended (degenerate) diffusion process and finally project on the
coordinate we are interested in.

\begin{proof}
  Consider the map from $C(\bb R_+;\bb R^n)$ to $C(\bb R_+;\bb
  R^{n+1})$ which is $\mc P^\epsilon_x$ a.s.\ defined by $X_t\mapsto
  (X_t, tW_t)$ and denote by $\mc Q^\epsilon_x$ the push forward of
  $\mc P^\epsilon_x$. The probability measure $\mc Q^\epsilon_x$ on
  $C(\bb R_+;\bb R^{n+1})$ can be realized as the distribution of the
  $\bb{R}^{n+1}$-valued diffusion process $\eta^{x}$ which is the
  unique strong solution of the stochastic differential equation
  \begin{equation*}
    \begin{cases}
      d\eta^{x}_t = \big[{c}_0 (\eta^{x}_t) +\epsilon
      \,{c}_1(\eta^{x}_t)\big] dt
      + \sqrt{\epsilon} \, \sigma(\eta^{x}_t) d\beta_t \;,\\
      \eta^{x}_0=(x,0)\;.
    \end{cases}
  \end{equation*}
  Denoting by $y=(x,z)$ the coordinates in $\bb R^{n+1}$ with $x\in\bb
  R^n$ and $z\in \bb R$, the vector fields $c_0, c_1:\bb
  R^{n+1}\rightarrow \bb R^{n+1} $ are given by
  \begin{equation*}
    c_0(y)=\left(
      \begin{array}{c}
        c(x)\\ 2|b(x)|^2
      \end{array}
    \right) 
    \qquad\qquad
    c_1(y)=\left(
      \begin{array}{c}
        0 \\ \nabla\cdot c(x)
      \end{array}
    \right)
  \end{equation*}
  while $\sigma$ is the map from $\bb R^{n+1}$ to the set of
  $(n+1)\times n$ matrices given by
  \begin{equation*}
    \sigma (y)=\left(
      \begin{array}{c}
        \id_{n\times n} \\ 2b(x)
      \end{array}
    \right) 
  \end{equation*}
  where $\id_{n\times n}$ stands for the identity operator on $\bb R^n$.  
  The corresponding diffusion coefficient is the $(n+1)\times(n+1)$
  matrix given by
  \begin{equation*}
    a(y):=\sigma(y) \sigma(y)^{\intercal}
    =\left(
      \begin{array}{ccc}
        \id_{n\times n}
        & & 2b(x) \\
        \\
        2b(x)^\intercal & & 4|b(x)|^2
      \end{array}
    \right)\;.
  \end{equation*}
  Observe that $a(y)$ is singular for any $y\in \bb{R}^{n+1}$.  More
  precisely, $\textrm{Ker\,} a(y)$ is the one dimensional subspace
  spanned by the eigenvector $v_{0}(y):=(2 b(x),-1)$ corresponding to
  the zero eigenvalue of $a(y)$.

  Let $\mc Q^\epsilon_{x,T}$ be the restriction of the probability
  $\mc Q^\epsilon_{x}$ to the time interval $[0,T]$.  In order to
  obtain the large deviation asymptotic of the family $\{\mc
  Q^\epsilon_{x_\epsilon,T}\}_{\epsilon>0}$ as $\epsilon\to 0$, we
  need an extension of the classical Freidlin-Wentzell results to the
  case in which the drift depends on $\epsilon$ and the diffusion
  matrix is degenerate. Such a generalization is proven in
  \cite[Thm.~III.2.13]{Azencott} and gives the following.  Given
  $y\in\bb{R}^{n+1}$, let $a_{\mc V}(y)$ be the restriction of the
  linear operator $a(y)$ to $\mc V(y):=\big( \textrm{Ker\,} a(y)
  \big)^{\perp}$ and denote by $a_{\mc V}^{-1}$ be its inverse.  Then,
  as $\epsilon\to 0$, the family of probability measures on
  $C([0,T];\bb R^{n+1})$ given by $\{\mc Q^\epsilon_{x_\epsilon,T}
  \}_{\epsilon>0}$ satisfies a large deviation principle with speed
  $\epsilon^{-1}$ and rate function $J^x_T:C([0,T], \bb{R}^{n+1})\to
  [0,+\infty]$ given by
  \begin{equation*}
    J^x_T(\psi) =
    \begin{cases}
      \displaystyle{ \frac 12 \int_0^T\!dt \: \big\langle \dot \psi_t
        -{c}_0 (\psi_t)\,,\, a_{\mc V}^{-1}(\psi_t)\,[\dot \psi_t
        -{c}_0 (\psi_t)] \big\rangle}
      & \textrm{if $\psi\in \widetilde H^x_T$} \\
      +\infty & \textrm{ otherwise} \\
    \end{cases}
  \end{equation*}
  where
  \begin{equation*}
    \begin{split}
      \widetilde H^x_T := & \Big\{\psi\in AC([0,T];\bb{R}^{n+1})\,:\:
      \int_0^T\!dt \: \big| \dot{\psi}_t \big|^2 < \infty \,,\:  \\
      & \quad \psi_0=(x,0)\,,\: \textrm{ and } \: \dot \psi_t -{c}_0
      (\psi_t)\in \mc V(\psi_t) \textrm{ for a.e. } t\in [0,T] \Big \}\;.
    \end{split}
  \end{equation*}

  Fix $y=(x,z)\in\bb R^n\times\bb R$ and observe that a vector $\zeta$
  in $\bb R^{n+1}$ belongs to $\mc V(y)$ if and only if $\zeta= \big(
  \xi, 2\langle b(x), \xi\rangle \big)$ for some vector $\xi$ in $\bb
  R^n$.  Therefore, recalling \eqref{pow},
  \begin{equation*}
    \widetilde H^x_T = \Big\{ (\varphi,\chi)\in C([0,T];\bb{R}^n \times \bb{R})
    \, : \:  \varphi\in H^x_T\,,\: \chi_t =t\mc L_t(\varphi)  \Big \}\;.
  \end{equation*}

  An elementary computation shows that if $\zeta=\big( \xi, 2\langle
  b(x), \xi\rangle \big)$ for some $\xi$ in $\bb R^n$ then
  $\langle\zeta, a_{\mc V}^{-1}(y) \zeta \rangle = |\xi|^2$.  Consider
  the projection $\pi:C([0,T];\bb{R}^n \times \bb{R}) \to \bb{R}$
  given by $\pi(\varphi,\chi)=\chi_T / T$.  The previous statement and
  the last displayed equation imply
  \begin{equation*}
    S^x_T (q) = \inf\big\{ J^x_T(\varphi,\chi) \,,\:
    \pi(\varphi,\chi)=q \big\}\;.
  \end{equation*}
  The proof is then concluded by contraction principle.
\end{proof}

\section{Long time asymptotic of the rate function}
\label{s:ltb}

In this section we conclude the proof of Theorem~\ref{t:mr} by showing
that the sequence of rate functions $\{S^x_T/T\}_{T>0}$ converges to
$s$ as $T\to\infty$.  As the relevant quantities involved in the large
deviations asymptotic are the minima of $S^x_T$, the appropriate
notion of convergence is the so-called $\Gamma$-convergence.  We
recall its definition and basic properties, see e.g.\ \cite{Br}.  Let
$\mc X$ be a complete separable metric space (simply $\bb R$ in our
application); a sequence of functions $F_n : \mc X \to [0,+\infty]$ is
said to \emph{$\Gamma$-converge} to $F: \mc X \to [0,+\infty]$ iff the
two following conditions are satisfied for each $x\in \mc X$. For any
sequence $x_n\to x$ we have $\varliminf_n F_n(x_n)\ge F(x)$
(\emph{$\Gamma$-liminf inequality}).  There exists a sequence $x_n\to
x$ such that $\varlimsup_n F_n(x_n)\le F(x)$ (\emph{$\Gamma$-limsup
inequality}).  It is easy to show that $\Gamma$-converges of
$\{F_n\}$, together with its \emph{equi-coercivity}, implies the
convergence of the minima of $F_n$.  It is also easy to check that the
$\Gamma$-convergence of a sequence $\{F_n\}$ implies a lower bound on
the infimum over a compact set and an upper bound of the infimum over
an open set, see e.g.\ \cite[Prop.~1.18]{Br}.  Hence, in view of the
exponential tightness in Lemma~\ref{t:expt} and Theorem~\ref{t:fixT},
the proof of Theorem~\ref{t:mr} is a consequence of the following
result.

\begin{theorem}
\label{t:4.1}
The following statements hold.
\begin{itemize}
\item[(i)]{The function $s^x$, as defined in \eqref{4.5}, is
    independent of $x$ and convex. Furthermore, for each $x\in \bb
    R^n$ and $q\in \bb R$
    \begin{equation}
      \label{s=limT}
      s(q) = \lim_{T\to\infty} \frac {S^{xx}_T(q)}{T} \;.
    \end{equation}
    Finally, $s$ satisfies the fluctuation theorem $s(q)-s(-q)=-q$.
}
\item[(ii)]{Let $\{x_T\}\subset \bb R^n$ be a bounded 
    sequence. For $T>0$ let $s_T:\bb R\to [0,+\infty)$ be the 
    function defined by $s_T(q) := T^{-1} \, S^{x_T}_T(q)$. 
    Then, as $T\to\infty$, the sequence of real functions 
    $\{s_T\}_{T>0}$ $\Gamma$-converges to $s$. 
}
\end{itemize}
\end{theorem}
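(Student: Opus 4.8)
\emph{Plan for Theorem~\ref{t:4.1}(i).} The basic mechanism is that closed loops concatenate: gluing $\varphi^1\in\mc A^{xx}_{T_1}(q)$ and $\varphi^2\in\mc A^{xx}_{T_2}(q)$ produces an element of $\mc A^{xx}_{T_1+T_2}(q)$ of Freidlin--Wentzell cost $I^x_{T_1}(\varphi^1)+I^x_{T_2}(\varphi^2)$, since $\mc L_{T_1+T_2}$ of the glued path is the $(T_1,T_2)$-weighted average of $\mc L_{T_1}(\varphi^1)$ and $\mc L_{T_2}(\varphi^2)$, hence still equals $q$. So $T\mapsto S^{xx}_T(q)$ is subadditive. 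Since assumption~(\textbf{B}) provides a loop along which $\int\langle b,\cdot\rangle\ne0$, by concatenating rescaled copies of such a loop one realizes any value of $\int_0^T\langle b(\varphi),\dot\varphi\rangle$ along a loop at $x$ with cost bounded uniformly for $T$ in compact intervals; thus $S^{xx}_T(q)<\infty$ for every $T>0$ and is locally bounded, and the continuous version of Fekete's lemma gives $\lim_{T\to\infty}S^{xx}_T(q)/T=\inf_{T>0}S^{xx}_T(q)/T=s^x(q)$, which is \eqref{s=limT}. Convexity of $s^x$ follows by concatenating, for $q=\theta q_1+(1-\theta)q_2$, near-optimal loops at $x$ of durations $\theta T$, $(1-\theta)T$ and powers $q_1$, $q_2$. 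Independence of $x$ follows by bracketing a near-optimal loop at $x$ between two bridging paths from $y$ to $x$: these cost $O(1)$, have fixed duration, and their circulations of $b$ can be tuned so the resulting loop at $y$ has power exactly $q$; dividing by the slightly larger duration and letting the loop length diverge yields $s^y(q)\le s^x(q)$, and symmetry gives equality. Finally, for the fluctuation theorem, time reversal $\varphi_t\mapsto\varphi_{T-t}$ is an involution of $\mc A^{xx}_T(q)$ onto $\mc A^{xx}_T(-q)$; writing $\langle\dot\varphi,c(\varphi)\rangle=-\tfrac12\tfrac{d}{dt}V(\varphi)+\langle b(\varphi),\dot\varphi\rangle$ via assumption~(\textbf{C}) and noting that the total-derivative term integrates to zero on a loop, one gets $I^x_T(\varphi_{T-\cdot})=I^x_T(\varphi)+Tq$, hence $S^{xx}_T(-q)=S^{xx}_T(q)+Tq$; dividing by $T$ and letting $T\to\infty$ gives $s(q)-s(-q)=-q$.

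\emph{Plan for the $\Gamma$-limsup in (ii).} Fix $q$ and $\delta>0$, pick $T_\delta$ and a loop $\psi$ at a reference point $0$ with power $q$ and $I^0_{T_\delta}(\psi)\le T_\delta(s(q)+\delta)$ (possible by (i)). On $[0,T]$ I would concatenate: a bridging path from $x_T$ to $0$ in unit time (cost $O(1)$, uniformly for $\{x_T\}$ bounded); the loop $\psi$ repeated $m=\lfloor(T-1)/T_\delta\rfloor$ times; and the deterministic flow $\dot y=c(y)$ started at $0$ for the remaining, bounded, time (zero cost). This path has cost $\le T(s(q)+\delta)+O(1)$, and its power $q_T$ converges to $q$, since every correction to the line integral and to the duration is $O(1)$ against a diverging horizon. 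A diagonal argument over $\delta\downarrow0$ then produces a recovery sequence $q_T\to q$ with $\varlimsup_T s_T(q_T)\le s(q)$.

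\emph{Plan for the $\Gamma$-liminf in (ii), the main obstacle.} One must show a free right endpoint cannot beat a closed loop asymptotically. Take $q_T\to q$, a subsequence along which $S^{x_T}_T(q_T)/T\to L_*:=\varliminf$, and $\varphi^T\in\mc A^{x_T}_T(q_T)$ with $I^{x_T}_T(\varphi^T)\le S^{x_T}_T(q_T)+1$. Assumption~(\textbf{C}) gives $\tfrac{d}{dt}V(\varphi^T_t)=-\tfrac12|\nabla V(\varphi^T_t)|^2+\langle\nabla V(\varphi^T_t),\dot\varphi^T_t-c(\varphi^T_t)\rangle\le\tfrac12|\dot\varphi^T_t-c(\varphi^T_t)|^2$, so $V(\varphi^T_t)\le V(x_T)+I^{x_T}_T(\varphi^T)\le CT$ for all $t$; in particular the endpoint lies in the sublevel set $\{V\le CT\}$. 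Now I extend $\varphi^T$ by the flow $\dot y=c(y)$, at zero cost; along it $\tfrac{d}{dt}V=-\tfrac12|\nabla V|^2$, and since assumption~(\textbf{A}) forces $|\nabla V(x)|\ge\langle\nabla V(x),x\rangle/|x|\to\infty$, there is a nondecreasing $\rho$ with $\rho(v)\to\infty$ and $|\nabla V(x)|^2\ge\rho(V(x))^2$; comparison with $\dot v=-\tfrac12\rho(v)^2$ shows this flow reaches a \emph{fixed} sublevel set $\{V\le\Lambda_0\}$ within time $\tau_T\le 2\int_{\Lambda_0}^{CT}dv/\rho(v)^2=o(T)$ (a Cesàro estimate, the integrand tending to $0$). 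Bridging from the compact set $\{V\le\Lambda_0\}$ back to $x_T$ in unit time at cost $O(1)$, I obtain a closed loop at $x_T$ of duration $T'=T+o(T)$, power $q'_T\to q$ (all corrections $o(T)$), and cost $\le S^{x_T}_T(q_T)+O(1)$. Then $s(q'_T)\le S^{x_T x_T}_{T'}(q'_T)/T'\le\big(S^{x_T}_T(q_T)+O(1)\big)/T'$ by \eqref{s=limT}, and the right-hand side tends to $L_*$; since $s$ is convex and finite, hence continuous, letting $T\to\infty$ along the subsequence gives $s(q)\le L_*$, the desired $\Gamma$-liminf inequality. Together with the exponential tightness of Lemma~\ref{t:expt}, Theorem~\ref{t:fixT}, and the elementary properties of $\Gamma$-convergence recalled above, this completes the proof of Theorem~\ref{t:mr}.
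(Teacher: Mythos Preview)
Your proof is correct and follows the same architecture as the paper: subadditivity of $T\mapsto S^{xx}_T(q)$ for item~(i), and for the $\Gamma$-liminf in~(ii) the key idea of closing a near-optimal free-endpoint path into a loop by running the deterministic flow $\dot y=c(y)$ back to a fixed compact at zero cost in time $o(T)$, then bridging. Two technical differences are worth noting.

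First, to control the endpoint the paper (Lemma~\ref{t:yn}) bounds $\int_0^{T_n}|\dot\varphi^n|^2\,dt\le C T_n$ and uses Cauchy--Schwarz to get $|y_n|\le C(T_n+1)$; the flow-back time is then estimated (Lemma~\ref{t:arrivo}) by a radial comparison $\dot r\le -\ell(r)+B$ coming directly from assumption~(\textbf{A}). You instead derive the pointwise Lyapunov bound $V(\varphi^T_t)\le V(x_T)+I^{x_T}_T(\varphi^T)\le CT$ from the differential inequality $\tfrac{d}{dt}V\le\tfrac12|\dot\varphi-c(\varphi)|^2$, and run the comparison in the variable $V$ via $|\nabla V|\ge\rho(V)$. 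Your route is arguably more intrinsic and controls the whole trajectory rather than just the endpoint.

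Second, and more substantively, the paper's Proposition~\ref{t:allungo} tunes the power of the extension $\gamma^n$ exactly (using the construction of Lemma~\ref{t:4.1.5} with a carefully chosen $q^{(2)}_n$) so that the closed loop has power precisely $q$. You bypass this: your loop has power $q'_T\to q$, and you conclude via $s(q'_T)\le S^{x_T x_T}_{T'}(q'_T)/T'$ (this is really the \emph{definition}~\eqref{4.5} together with the $x$-independence from~(i), rather than~\eqref{s=limT} as you cite) and the continuity of the finite convex function $s$. This is a genuine simplification over the paper's exact-tuning argument.

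For the $\Gamma$-limsup the paper is more economical: it takes the constant recovery sequence $q_n=q$ and uses $S^{x_n}_{T_n}(q)\le S^{x_n x_n}_{T_n}(q)$ together with~\eqref{ney}. Your explicit repetition-of-a-near-optimal-loop construction is correct but heavier than necessary.
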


Postponing the proof of the above statements, we first 
show that they imply Theorem~\ref{t:mr}.

\begin{proof}[Proof of Theorem~\ref{t:mr}]
  We start by showing the large deviations upper bound.  Fix
  $T,\epsilon>0$. In view of the representation used in
  Section~\ref{s:stb}, the map $\bb R^n\ni x\mapsto \mc P^\epsilon_{x}
  \circ (W_T)^{-1}$ is continuous with respect to the weak topology of
  probability measures on $\bb R$. Given a closed set $\mc C\subset
  \bb R$, the map $\bb R^n \ni x \mapsto \mc P^\epsilon_{x} \big(W_T
  \in \mc C\big)\in \bb R$ is therefore upper semicontinuous. Hence,
  given a nonempty compact $K\subset \bb R^n$, there exists a sequence
  $\{x_{T,\epsilon}\}\subset K$ such that
  \begin{equation*}
    \sup_{x\in K} \, \mc P^\epsilon_{x} \big(W_T \in \mc C\big) = 
    \mc P^\epsilon_{x_{T,\epsilon}} \big(W_T \in \mc C\big) \;.
  \end{equation*}
  By taking, if necessary, a subsequence we may assume that
  $\{x_{T,\epsilon}\}_{\epsilon>0}$ converges to some $x_T\in K$. The
  large deviations upper bound in Theorem~\ref{t:fixT} now yields
  \begin{equation*}
   \varlimsup_{\epsilon\to 0}  \; \sup_{x\in K} \:
    \frac {\epsilon}T \log 
    \mc P^\epsilon_x \big( W_{T} \in \mc C \big) 
    \le  - \inf_{q\in\mc C} \, \frac 1T S^{x_T}_T(q) \;. 
  \end{equation*}

  In view of the exponential tightness proven in Lemma~\ref{t:expt},
  we may assume that $\mc C$ is a compact subset of $\bb R$. Since
  $\{x_T\}\subset K$, by item (ii) in Theorem~\ref{t:4.1} and
  \cite[Prop.~1.18]{Br} we deduce
  \begin{equation*}
    \varliminf_{T\to\infty} \: 
    \inf_{q\in\mc C} \, \frac 1T S^{x_T}_T(q) 
    = \varliminf_{T\to\infty} \: 
    \inf_{q\in\mc C} \, s_T(q)  \ge  \inf_{q\in\mc C} \, s(q)\;,
  \end{equation*}
  which concludes the proof the large deviations upper bound.

  The proof of the large deviations lower bound is analogous and the
  other statements follow directly from item (i) in Theorem~\ref{t:4.1}.
\end{proof}

In order to prove Theorem~\ref{t:4.1}, we start by the following
topological lemma.

\begin{lemma}
  \label{t:4.1.5}
  Fix $q\in \bb R$ and $x,y\in \bb R^n$. Then:
  \begin{itemize}
  \item[(i)]{for each $T>0$ the set 
  $\mc A^{xy}_T(q)$, as defined in \eqref{4.4}, is not empty;} 
  \item[(ii)]{there exist reals $T_0,C \in (0,\infty)$, 
      depending on $q$ and $x,y$, such that for any $T\ge T_0$ 
      \begin{equation*}
        S^{xy}_{T} (q) \le C \, T \;.
      \end{equation*}
  } 
  \end{itemize}
\end{lemma}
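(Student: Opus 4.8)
The plan is to build explicit competitor paths and estimate their Freidlin--Wentzell cost. For part (i), fix $q\in\bb R$ and endpoints $x,y$. The idea is to first travel from $x$ to $y$ along some fixed absolutely continuous path, then append a loop at $y$ whose sole purpose is to adjust the value of the power functional $\mc L_T$ to hit the prescribed level $q$. Since $b$ is not conservative (assumption (\textbf{B})), there exists a closed path $\gamma$ based at $y$ along which $\oint \langle b(\gamma_t),\dot\gamma_t\rangle\,dt \neq 0$: indeed, if this line integral vanished on every loop, $b$ would be conservative. Call this nonzero value $w_0$. Traversing $\gamma$ forward increases the integral $\int\langle b,\dot\varphi\rangle$ by $w_0$; traversing it backward decreases it by $w_0$; and concatenating $k$ copies (in either orientation) scales the contribution by $k$. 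More flexibly, reparametrizing and using fractional/real multiples of loops, together with the observation that traversing $\gamma$ at different speeds changes the time but not the line integral (which is parametrization-invariant), one can realize \emph{any} real target for $\int_0^T\langle b(\varphi_t),\dot\varphi_t\rangle\,dt$ on a path from $x$ to $y$, provided $T$ is large enough to accommodate the construction. This shows $\mc A^{xy}_T(q)$ is nonempty for every $T>0$ once we also allow ourselves to waste time sitting near $y$ (a constant path contributes nothing to $\mc L_T$ and keeps us in $H^x_T$), which handles small $T$ as well.

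For part (ii), we need a quantitative cost bound. The key point is that the Freidlin--Wentzell cost of a path is $\frac12\int_0^T|\dot\varphi_t-c(\varphi_t)|^2\,dt$, which for a \emph{constant} path $\varphi_t\equiv p$ equals $\frac T2|c(p)|^2$, growing linearly in $T$. So the strategy is: use a fixed finite-time, finite-cost path $\varphi^{(1)}$ on $[0,t_1]$ going from $x$ to $y$; then on $[t_1, t_1+t_2]$ traverse a fixed loop $\gamma$ (or a fixed finite collection of loops, run forward or backward as needed) to bring the accumulated value of $\int\langle b,\dot\varphi\rangle$ to exactly $qT/2$ — note that this required total must grow like $T$, so we traverse the base loop $\gamma$ a number of times proportional to $T$, contributing a cost proportional to $T$ (each traversal of the fixed loop $\gamma$ has a fixed cost, and we run $O(T)$ of them); finally on the remaining interval $[t_1+t_2, T]$ sit at the constant path $\varphi_t\equiv y$, contributing cost $\frac{(T-t_1-t_2)}{2}|c(y)|^2$. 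Summing, the total cost is bounded by $C_1 + C_2 T \le C\,T$ for $T$ large, where the constants depend only on $q$, $x$, $y$, the chosen path $\varphi^{(1)}$, the loop $\gamma$, and $\sup|c|$ on the relevant compact set. Taking the infimum over $\mc A^{xy}_T(q)$ gives $S^{xy}_T(q)\le C\,T$ for $T\ge T_0$.

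The main obstacle is the bookkeeping in the loop-traversal step: one must verify that the number of loop traversals needed to reach the target $qT/2$ can be chosen to grow \emph{at most} linearly in $T$ while the per-traversal cost stays bounded, and that the total time consumed by these traversals, $t_2$, also grows at most linearly (so that $T - t_1 - t_2 \ge 0$ for large $T$, leaving room for the constant tail; in fact one wants $t_2 \le T - t_1$, which is automatic if each traversal takes a fixed time and there are $O(T)$ of them with a small enough proportionality constant — one can always slow down a single loop traversal to consume exactly the leftover time instead, since $\mc L$ is parametrization-invariant and slowing down only \emph{decreases} the kinetic cost $\int|\dot\varphi|^2$). A clean way to organize this is: pick the loop $\gamma$ with $w_0\neq 0$, and for target $\int_0^T\langle b,\dot\varphi\rangle = qT/2$ on the segment after reaching $y$, traverse $\gamma$ exactly $\lceil |q|T/(2|w_0|)\rceil$ times in the appropriate direction and then append one more short corrective loop of the same family, reparametrized to consume whatever time remains before $T$ and tuned to close the residual gap; all pieces have cost $O(1)$ each and number $O(T)$, so the bound $S^{xy}_T(q) \le CT$ follows. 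The sign/direction choice for the loops handles both signs of $q$ uniformly, and $T_0$ is chosen large enough that $\lceil |q|T/(2|w_0|)\rceil$ loop-times fit inside $[t_1,T]$.
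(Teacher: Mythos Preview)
Your strategy---transport from $x$, iterate a fixed nontrivial loop to load the line integral, then sit at the endpoint---is precisely the paper's. Two points need repair before it is a proof.

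The real gap is in hitting the constraint $\mc L_T(\varphi)=q$ \emph{exactly}. You invoke ``fractional/real multiples of loops'' and later ``one more short corrective loop \dots\ tuned to close the residual gap'', but neither phrase names a mechanism. A non-integer traversal of $\gamma$ is not a closed path, and reparametrization---as you yourself observe---leaves the line integral unchanged, so it cannot correct anything. Integer combinations of forward and backward runs of a single loop $\gamma$ realize only the lattice $\bb Z\, w_0$, not all of $\bb R$. The paper supplies the missing step: since $\bb R^n$ is simply connected, the loop $\tilde\xi$ with $\mc L_1(\tilde\xi)=\bar q\neq 0$ can be continuously contracted to a constant loop, and $p\mapsto \mc L_1$ is continuous along the homotopy; together with time reversal this yields, for every $p\in[-|\bar q|,|\bar q|]$, a closed loop $\xi$ with $\mc L_1(\xi)=p$. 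One then chooses the integer $N$ and the per-loop value $p$ jointly so that the total matches the target on the nose. Without this intermediate-value argument (or an equivalent device) neither (i) nor (ii) is complete.

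A smaller point: your sentence ``which handles small $T$ as well'' is backward. Sitting at $y$ consumes \emph{extra} time and is what absorbs the slack when $T$ is large; it does nothing for small $T$. The correct argument for arbitrary $T>0$ is the one you allude to elsewhere: once the homotopy argument above lets you build, on \emph{some} time interval, a path from $x$ to $y$ whose line integral equals the required value $qT/2$, reparametrize it onto $[0,T]$. Parametrization invariance of the line integral gives $\varphi\in\mc A^{xy}_T(q)$. The cost $I^x_T$ may blow up under this rescaling, but (i) only asks for nonemptiness; the linear cost bound in (ii) is needed only for $T\ge T_0$, where your counting (fixed transport cost, $O(T)$ loops of bounded unit cost, constant tail of cost $\tfrac12|c(y)|^2$ per unit time) is correct.
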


\begin{proof}
  The idea of the proof is quite simple. Since the vector field $b$ is
  not conservative, there exists a closed path for which the power
  dissipated by $b$ does not vanish. We thus only need to go from $x$
  to such closed path, repeat it an appropriate number of times,
  and then go to $y$. 

  Fix $q\in\bb R$ and $x,y\in\bb R^n$.  For suitable constants
  $T_0,C>0$, given $T\ge T_0$ we shall exhibit a path $\varphi\in \mc
  A^{xy}_T(q)$ such that $I^x_T(\varphi) \le C \,T$. Since the work
  done by the vector field $b$ along the path $\varphi$, i.e.\
  $\int_0^T\!dt\, \langle b(\varphi_t) , \dot \varphi_t \rangle$, is
  invariant with respect to reparameterizations of $\varphi$, this will
  prove both the statements of the lemma.

  Since the vector field $b$ is not conservative, there exists a point
  in $\bb R^n$, say $z$, and a closed path $\tilde\xi\in H^z_1$ such
  that $\tilde\xi_0=\tilde\xi_1=z$ and $\mc L_1 (\tilde\xi) =
  \upbar{q} \neq 0$. By denoting with $\hat\xi$ the time reversal of
  $\tilde\xi$ we also have $\mc L_1 (\hat\xi)= -\upbar{q}$.  By the
  continuity of the functional $\mc L_1$ on $H^z_1$ and the fact that
  $\bb R^n$ is simply connected, for each $p \in [ -|\bar q|, |\bar
  q|]$ there exists a path $\xi\in H^z_1$ such that $\xi_0=\xi_1=z$
  and $\mc L_1 (\xi) = p$.  Given $\lambda>0$ we let ${\xi}^\lambda
  \in \mc A^{zz}_{\lambda^{-1}}(\lambda p)$ be defined by
  ${\xi}^\lambda_t:= \xi_{\lambda t}$ and extend it by periodicity to
  a function defined on $\bb R$. Given $u,v\in \bb R^n$, let
  $\zeta^{uv}_t:= u + t(v-u)$, $t\in [0,1]$ and $\ell_{uv} := \mc
  L_1(\zeta^{uv})$.  Given a positive integer $N$ and $T_1\ge 0$ we
  then define the path $\varphi$, going from $x$ to $y$ in the time
  interval $[0,2+T_1+N\lambda^{-1}]$, by
  \begin{equation}
    \label{toppath}
    \varphi_t := 
    \begin{cases}
      \zeta^{xz}_t & t\in [0,1) \\
      {\xi}^\lambda_{t-1} & t \in [1,1+N\lambda^{-1}) \\
       \zeta^{zy}_{t-(1+N\lambda^{-1})} & t\in [1+N\lambda^{-1},2+N\lambda^{-1})\\
       y &  t\in [2+N\lambda^{-1},2+T_1+N\lambda^{-1}]\;.
    \end{cases}
  \end{equation}
  Then, by construction, 
  \begin{equation*}
    \mc L_{2+T_1+N\lambda^{-1}} (\varphi) = 
    \frac {1}{2+T_1+N\lambda^{-1}} \big( \ell_{xz} +\ell_{zy} + N p \big)
  \end{equation*}
  and 
 \begin{equation*}
    I^x_{2+T_1+N\lambda^{-1}} (\varphi) =  I^x_1(\zeta^{xz}) 
    + N I^z_{\lambda^{-1}} (\xi^\lambda) + I^z_1(\zeta^{zy}) + 
    \frac {T_1}2 \, |c(y)|^2  \;.
 \end{equation*}

 We now choose $\lambda=\lambda(q)>0$ such that $|q| \lambda^{-1} \le
 (1/2)|\bar q|$, $T_1\in [0,\lambda^{-1})$, and let
 $T_0=T_0(x,y,q)>2+\lambda^{-1}$ be such that
 \begin{equation*}
   \Big| \frac{(2+T_1) q - \ell_{xz} - \ell_{zy}}{\lambda (T_0-2) -1}\Big|
   \le \frac 12 \, |\bar q|\;.
 \end{equation*}
 For $T\ge T_0$ we next choose $N = [\lambda(T-2)]$, where $[\cdot]$
 denotes the integer part, $T_1= (T-2) - \lambda^{-1} [\lambda(T-2)]$,  
 and 
 \begin{equation*}
   p = \lambda^{-1} q + \frac{(2+T_1) q - \ell_{xz} - \ell_{zy}}{N} \;.
 \end{equation*}
 Note that $T_1\in [0,\lambda^{-1})$ and $p \in [-|\bar q|, |\bar q|]$
 by the previous choices.  As it is simple to check, the path
 $\varphi$ above constructed then satisfies $\mc L_{T}(\varphi) =q$
 and the bound $I^x_T (\varphi) \le C T$ for some $C=C(x,y,q)$
 independent of $T\ge T_0$.
\end{proof}

\begin{proof} [Proof of Theorem \ref{t:4.1}, item \emph{(i)}]
  Fix $x\in \bb R^n$, $q\in \bb R$ and $T_1,T_2>0$.  From
  Lem\-ma~\ref{t:4.1.5} and the goodness of the rate function $I^x_T$ it
  follows there exist $\varphi^i \in \mc A^{xx}_{T_i}(q)$ such that
  $S^{xx}_{T_i}(q)= I^x_{T_i}(\varphi^i)$, $i=1,2$.  By considering
  the path $\varphi_t$, $t\in [0,T_1+T_2]$ given by
  \begin{equation*}
  \varphi_t:= \id_{[0,T_1)}(t)\, \varphi^1_t +  
  \id_{[T_1,T_1+T_2]}(t)\, \varphi^2_{t-T_1}
  \end{equation*}
  we deduce that the sequence $\{S^{xx}_{T}(q)\}_{T>0}$ is
  subadditive, i.e.\
  \begin{equation*}
    S^{xx}_{T_1+T_2}(q) \le  S^{xx}_{T_1}(q)+ S^{xx}_{T_2}(q)\;.
  \end{equation*}
  Recalling \eqref{4.5}, the subadditivity just proven implies
  \eqref{s=limT}. By using the existence of the limit and again
  Lemma~\ref{t:4.1.5}, it is now simple to show that $s^x$ does not
  depend on $x$ and that it is convex.

  To prove the fluctuation theorem, observe that in view of
  \eqref{pow}, \eqref{Sxy}, and the orthogonality condition
  (\textbf{C})
  \begin{equation*}
    \frac 1T\, S^{xx}_T(q) = \inf\Big\{  \frac 1{2T} \int_0^T\!dt \, 
    \Big[ \big| \dot \varphi_t\big|^2 + \big|c(\varphi_t)\big|^2\Big] 
    - \frac 12 q \,,\: \varphi\in \mc A^{xx}_T(q) \Big\} \;.
  \end{equation*}
  In particular, if $\varphi$ is a minimizer for the right hand side
  above, then the path $\varphi^*$ defined by $\varphi^*_t:=
  \varphi_{T-t}$ is a minimizer for the analogous problem with $q$
  replaced by $-q$. Hence
  \begin{equation*}
    \frac 1T\, S^{xx}_T(q) -\frac 1T\, S^{xx}_T(-q) = -q 
  \end{equation*}
  and the statement follows by taking the limit $T\to \infty$.
\end{proof}

\begin{proof} [Proof of Theorem \ref{t:4.1}, item \emph{(ii)}, $\Gamma$-limsup
  inequality]
  Fix a sequence $T_n\to \infty$ and $q\in\bb R$. We need to show
  there exists a sequence $q_n\to q$ such that 
  $\varlimsup_n s_{T_n}(q_n) \le s(q)$.
  We claim it is enough to choose the constant sequence
  $q_n=q$. Indeed, letting $x_n:=x_{T_n}$, 
  from the very definition \eqref{Sx} of $S^{x}_T$ it follows  
  \begin{equation}
    \label{Glimsup}
    \varlimsup_n s_{T_n}(q) =
    \varlimsup_n \frac{1}{T_n} S^{x_n}_{T_n}(q) 
    \le  \varlimsup_n \frac{1}{T_n} S^{x_n x_n}_{T_n}(q) \;.
  \end{equation}
  Since $\{x_n\}$ is a bounded sequence, from
  Lemma~\ref{t:4.1.5} and \eqref{s=limT} it easily follows that 
  \begin{equation}
    \label{ney}
    \lim_n \frac{1}{T_n} S^{x_n x_n}_{T_n}(q) =s(q)
  \end{equation}
  which concludes the proof. 
\end{proof}

In order to prove the $\Gamma$-liminf inequality in Theorem~\ref{t:4.1},
we need to show that in the inequality in \eqref{Glimsup} we did not
loose much. On the other hand, if we let $\varphi^{T,x}$ be a
minimizer for the variational problem on the right hand side of
\eqref{Sx}, there is no reason for $\varphi^{T,x}_T$ to be equal to $x$.
In the next proposition, we show that we can extend $\varphi^{T,x}$ to
a path $\psi$ defined on a the longer time interval $[0,T+\tau]$ 
in such way that $\psi_{T+\tau}=x$ and the loss in the inequality in 
\eqref{Glimsup} becomes negligible as $T\to\infty$.

\begin{proposition}
  \label{t:allungo}
  Fix $q\in \bb R$, a bounded sequence $\{x_n\}\subset \bb R^n$, and
  sequences $T_n\to \infty$, $q_n\to q$. Let $\varphi^n\in \mc
  A^{x_n}_{T_n}(q_n)$ be such that $S^{x_n}_{T_n}(q_n) =
  I^{x_n}_{T_n}(\varphi^n)$ and set $y_n:= \varphi^n_{T_n}$. There
  exist sequences $\tau_n\to \infty$ and $\gamma^n\in \mc A^{y_n
    x_n}_{\tau_n}(\widehat{q}_n)$, where ${\widehat q}_n:= q +(q-q_n)
  T_n / \tau_n$, such that
  \begin{equation}
    \label{stallungo}
    \lim_n \frac {\tau_n}{T_n} =0 
    \qquad\qquad
     \lim_n \frac 1{T_n} \, I^{y_n}_{\tau_n} (\gamma^n) =0 \;.
  \end{equation}
\end{proposition}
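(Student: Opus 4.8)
The plan is to build $\gamma^n$ by concatenating four legs. Leg (a): follow the deterministic trajectory $t\mapsto\upbar{y}_t$ solving $\dot{\upbar{y}}=c(\upbar{y})$, $\upbar{y}_0=y_n$, for a time $s_n$; this leg has zero Freidlin--Wentzell cost. Leg (b): run the straight segment $\zeta^{y'_n z}$ from the endpoint $y'_n:=\upbar{y}_{s_n}$ of leg (a) to the base point $z$ produced in the proof of Lemma~\ref{t:4.1.5}. Leg (c): traverse $N_n$ times a closed path based at $z$, run at a fixed time period $\rho_0$, chosen from the loop family of that proof so as to carry a prescribed dissipation $p_n$. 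Leg (d): run the straight segment $\zeta^{z x_n}$ back to $x_n$. One then sets $\tau_n:=s_n+2+N_n\rho_0$, and the free parameters $s_n,\rho_0,N_n,p_n$ will be tuned so that $\mc L_{\tau_n}(\gamma^n)=\widehat q_n$ while \eqref{stallungo} holds; note that $I^{y_n}_{\tau_n}$ is additive over the legs and the total work equals $\tfrac{\tau_n}{2}\mc L_{\tau_n}(\gamma^n)$.

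Two preliminary estimates are needed. First, $|y_n|=o(T_n)$. Expanding the square and integrating by parts the term $\langle\dot\varphi_t,\nabla V(\varphi_t)\rangle$, one has for every $\varphi\in H^x_T$
\begin{equation*}
  I^x_T(\varphi)=\frac12\int_0^T\!dt\;\big[\,|\dot\varphi_t|^2+|c(\varphi_t)|^2\,\big]+\frac12\big[V(\varphi_T)-V(\varphi_0)\big]-\frac T2\,\mc L_T(\varphi)\;.
\end{equation*}
By \eqref{Sx}, $S^{x_n}_{T_n}(q_n)\le S^{x_nx_n}_{T_n}(q_n)$, and the latter is $\le C\,T_n$ for all large $n$ by the construction in the proof of Lemma~\ref{t:4.1.5}, uniformly because $\{x_n\}$ and $\{q_n\}$ are bounded; inserting the minimizer $\varphi^n$ into the above identity and dropping the nonnegative first term yields $V(y_n)\le C'T_n$, whence $|y_n|/T_n\to0$ by the superlinear growth of $V$. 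Second, by assumption (\textbf{A}) together with the boundedness of $b$ there exists $R>0$ with $\langle c(x),x\rangle\le-|x|$ whenever $|x|\ge R$; hence $|\upbar{y}_t|$ decreases at unit rate until the trajectory enters $\{|x|\le R\}$, where it then remains. Taking $s_n:=|y_n|$ it follows that $s_n=o(T_n)$, that $y'_n$ lies in the fixed compact $\{|x|\le R\}$, that leg (a) has zero cost, and — using assumption (\textbf{C}) — that the work done along it equals $\int_0^{s_n}\!|b(\upbar{y}_t)|^2\,dt\in[0,Bs_n]$ with $B:=\sup_x|b(x)|^2$.

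For leg (c), recall from the proof of Lemma~\ref{t:4.1.5} the point $z$ and the unit-time loop $\tilde\xi\in H^z_1$ with $\mc L_1(\tilde\xi)=\upbar{q}\neq0$. The straight homotopy $\xi^\theta_s:=z+\theta(\tilde\xi_s-z)$, $\theta\in[0,1]$, consists of loops at $z$ with $\|\dot\xi^\theta\|_{L^2([0,1])}\le\|\dot{\tilde\xi}\|_{L^2([0,1])}$ and with values in a fixed bounded set, along which $\theta\mapsto\mc L_1(\xi^\theta)$ runs continuously from $0$ to $\upbar{q}$; using also time reversals, every $p\in[-|\upbar{q}|,|\upbar{q}|]$ is realized as $\mc L_1(\xi^{(p)})$ for some loop $\xi^{(p)}$ of this type, the quantities $\int_0^1|\dot\xi^{(p)}_s|^2\,ds$ and $\int_0^1|c(\xi^{(p)}_s)|^2\,ds$ being bounded uniformly in $p$. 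A short computation (using that $\xi^{(p)}$ is closed) then shows that its reparametrization to time period $\rho_0$ has $I$-cost at most a constant $\kappa=\kappa(\rho_0)$ and does work exactly $p/2$; likewise, as $y'_n$ and $x_n$ range over a fixed compact set, each of the segments (b), (d) has cost and work bounded by a constant $C_0$. Choosing $\rho_0$ small enough that $|\rho_0 q|\le|\upbar{q}|/2$ and imposing that the total work of $\gamma^n$ equal $\tfrac{\tau_n}2\widehat q_n=\tfrac{\tau_n}2 q+\tfrac{T_n}2(q-q_n)$, one solves for $p_n=\rho_0 q+\big[(s_n+2)q+E_n\big]/N_n$, where $E_n$ collects the works of legs (a), (b), (d), so that $|E_n|\le E^+_n:=|T_n(q-q_n)|+2Bs_n+C_6$ for a suitable constant $C_6>0$. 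Since $q_n\to q$ and $s_n=o(T_n)$ one has $E^+_n=o(T_n)$, while $E^+_n\ge C_6$. Finally take $N_n:=\max\{\lceil M_n\rceil,\lceil\sqrt{T_n}\,\rceil\}$ with $M_n:=2\big[(s_n+2)|q|+E^+_n\big]/|\upbar{q}|$: then $N_n\ge M_n$, which forces $|p_n|\le|\upbar{q}|$ so the loop $\xi^{(p_n)}$ exists, and at the same time $N_n\to\infty$ and $N_n=o(T_n)$. Consequently $\tau_n\to\infty$, $\tau_n/T_n\to0$, $I^{y_n}_{\tau_n}(\gamma^n)\le 2C_0+N_n\kappa=o(T_n)$, and $\gamma^n\in\mc A^{y_nx_n}_{\tau_n}(\widehat q_n)$ by construction, so \eqref{stallungo} follows. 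I expect the delicate point to be exactly this last balancing: the \emph{a priori} unbounded — though still $o(T_n)$ — error budget $E^+_n$ must be absorbed into the loop count $N_n$ while keeping each loop's dissipation inside $[-|\upbar{q}|,|\upbar{q}|]$ and keeping $\tau_n$ both divergent and negligible with respect to $T_n$.
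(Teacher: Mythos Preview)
Your proof is correct and follows the same overall strategy as the paper: first drift along the deterministic flow $\dot{\upbar y}=c(\upbar y)$ to reach a fixed compact set at zero Freidlin--Wentzell cost, then invoke the loop construction from Lemma~\ref{t:4.1.5} to return to $x_n$ while realizing the required dissipation $\widehat q_n$, balancing the loop count $N_n$ and per-loop dissipation $p_n$ so that $\tau_n/T_n\to0$ and the total cost is $o(T_n)$.

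The one genuine simplification in your argument concerns the two preliminary estimates. The paper (Lemma~\ref{t:yn}) uses the identity you wrote down but extracts only the bound $\int_0^{T_n}|\dot\varphi^n_t|^2\,dt\le C_2T_n$, whence $|y_n|=O(T_n)$ by Cauchy--Schwarz; it then needs the separate Lemma~\ref{t:arrivo}, where the sharper statement $\sigma_K(y)/|y|\to0$ is proved from assumption~(\textbf{A}), to conclude that the hitting time is $o(T_n)$. You instead drop the nonnegative kinetic term to get $V(y_n)\le C'T_n$ directly, and then the superlinear growth of $V$ already forces $|y_n|=o(T_n)$; with this stronger input the crude linear hitting-time bound $s_n\le|y_n|$ suffices and Lemma~\ref{t:arrivo} becomes unnecessary. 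The remaining parameter balancing (your explicit choice of $N_n$ via a $\max$ with $\lceil\sqrt{T_n}\rceil$, versus the paper's choice of an auxiliary $\tilde\sigma_n$ and a waiting time $T_1$) is a cosmetic variation of the same idea.
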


Assuming the above proposition, we conclude the proof of
Theorem~\ref{t:4.1}.

\begin{proof} [Proof of Theorem \ref{t:4.1}, item \emph{(ii)},
  $\Gamma$-liminf inequality]
  Fix $q\in\bb R$, a sequence $T_n\to \infty$, and a sequence $q_n\to q$.  
  We need to show that $\varliminf_n s_{T_n}(q_n) \ge s(q)$. We define
  $x_n:=x_{T_n}$ so that $s_{T_n}(q_n) = T_n^{-1} S^{x_n}_{T_n}(q_n)$.

  Let $\varphi^n\in \mc A^{x_n}_{T_n}(q_n)$, $\{y_n\}\subset \bb R^n$,
  $\tau_n\to\infty$, $\{{\widehat q}_n\} \subset \bb R$, and
  $\gamma^n\in \mc A^{y_n x_n}_{\tau_n}({\widehat q}_n)$ be as in the
  statement of Proposition~\ref{t:allungo}.  Define the path
  $\psi^n\in H^{x_n}_{T_n+\tau_n}$ by
  \begin{equation*}
    \psi^n_t:= \id_{[0,T_n)}(t)\, \varphi^n_t +  
    \id_{[T_n,T_n+\tau_n]}(t)\, \gamma^n_{t-T_n} \;.
  \end{equation*}
  From the definition of ${\widehat q}_n$ it follows that 
  $\psi^n\in \mc A^{x_n x_n}_{T_n+\tau_n} (q)$. Since 
  \begin{equation*}
  I^{x_n}_{T_n+\tau_n} (\psi^n) = I^{x_n}_{T_n}(\varphi^n) 
  + I^{y_n}_{\tau_n} (\gamma^n) \;,
  \end{equation*}
  we have 
  \begin{equation*}
    S^{x_n}_{T_n} (q_n) 
    = I^{x_n}_{T_n+\tau_n} (\psi^n) -  I^{y_n}_{\tau_n} (\gamma^n) 
    \ge S^{x_n x_n}_{T_n+\tau_n}(q) -  I^{y_n}_{\tau_n} (\gamma^n) \;.
  \end{equation*}
  Divide the previous inequality by $T_n$ and take the liminf as $n\to
  \infty$. Since $\{x_n\}$ is bounded, the proof is achieved by using
  \eqref{ney} and \eqref{stallungo}.
\end{proof}

The two following lemmata are used in the proof of
Proposition~\ref{t:allungo}.  In the first one we show that the
endpoint of the minimizer for the variational problem on the right
hand side of \eqref{Sx} is at most at distance $O(T)$ from the initial
point $x$. In the second one we then show we can get back to a compact
independent of $T$ in a time which is $o(T)$.

\begin{lemma}
  \label{t:yn}
  Let  $\{y_n\} \subset \bb R^n$ be as in the statement of
  Proposition~\ref{t:allungo}. Then there exists a constant $C>0$ 
  such that $|y_n| \le C (T_n+1)$ for any $n\ge 1$. 
\end{lemma}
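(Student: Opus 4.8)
The plan is to combine the a priori energy estimate furnished by Lemma~\ref{t:4.1.5} with an elementary differential inequality for $t\mapsto|\varphi^n_t|$ that uses the confining assumption (\textbf{A}).

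First I would record a uniform energy bound. Since $\{x_n\}$ is bounded and $q_n\to q$, the path constructed in the proof of Lemma~\ref{t:4.1.5} can be produced with constants $T_0,C$ that may be chosen independently of $n$: the only quantities in the resulting value of $I^{x_n}_{T_n}$ that depend on $x_n$ are those coming from the connecting segments $\zeta^{x_nz},\zeta^{zx_n}$ and from $|c(x_n)|^2$, and these remain bounded for $x_n$ in a bounded set, while $q_n$ stays in a compact set. Hence there are $n_0\ge1$ and $C_1<\infty$ with
\[
I^{x_n}_{T_n}(\varphi^n)=S^{x_n}_{T_n}(q_n)\le S^{x_nx_n}_{T_n}(q_n)\le C_1\,T_n ,
\qquad n\ge n_0 ,
\]
where the first inequality is \eqref{Sx}. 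In particular $\varphi^n\in H^{x_n}_{T_n}$ and, recalling \eqref{4.3} and using Cauchy--Schwarz,
\[
\int_0^{T_n}\big|\dot\varphi^n_t-c(\varphi^n_t)\big|\,dt
\le\sqrt{T_n}\,\Big(\int_0^{T_n}\big|\dot\varphi^n_t-c(\varphi^n_t)\big|^2\,dt\Big)^{1/2}
=\sqrt{T_n}\,\big(2I^{x_n}_{T_n}(\varphi^n)\big)^{1/2}\le\sqrt{2C_1}\;T_n .
\]

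Next I would exploit (\textbf{A}) to obtain a constant $C_2$ with $\langle x,c(x)\rangle\le C_2|x|$ for every $x\in\bb R^n$. Indeed, by \eqref{c=} and the boundedness of $b$ in (\textbf{B}),
\[
\frac{\langle x,c(x)\rangle}{|x|}=-\frac12\,\frac{\langle x,\nabla V(x)\rangle}{|x|}+\frac{\langle x,b(x)\rangle}{|x|}\ \longrightarrow\ -\infty
\qquad\text{as }|x|\to\infty ,
\]
so this quantity is bounded above on $\bb R^n\setminus\{0\}$. For $\delta>0$ set $\rho_\delta(t):=\big(|\varphi^n_t|^2+\delta^2\big)^{1/2}$; this is absolutely continuous and, for a.e.\ $t$,
\[
\dot\rho_\delta(t)=\frac{\langle\varphi^n_t,\dot\varphi^n_t\rangle}{\rho_\delta(t)}
=\frac{\langle\varphi^n_t,\dot\varphi^n_t-c(\varphi^n_t)\rangle+\langle\varphi^n_t,c(\varphi^n_t)\rangle}{\rho_\delta(t)}
\le\frac{|\varphi^n_t|}{\rho_\delta(t)}\big(|\dot\varphi^n_t-c(\varphi^n_t)|+C_2\big)\le|\dot\varphi^n_t-c(\varphi^n_t)|+C_2 .
\]
Integrating over $[0,T_n]$ and letting $\delta\downarrow0$ gives
\[
|y_n|=|\varphi^n_{T_n}|\le|x_n|+\int_0^{T_n}\big|\dot\varphi^n_t-c(\varphi^n_t)\big|\,dt+C_2\,T_n
\le|x_n|+\big(\sqrt{2C_1}+C_2\big)T_n .
\]
Since $\sup_n|x_n|<\infty$, this yields $|y_n|\le C(T_n+1)$ for $n\ge n_0$, and enlarging $C$ to cover the finitely many indices $n<n_0$ (for each of which $\varphi^n$ is a fixed path on a fixed interval) finishes the proof.

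The routine ingredient is the uniform energy bound of the first step; the substance of the argument is the radial estimate, where the unboundedness of $c$ causes no trouble because only the radial component $\langle x,c(x)\rangle/|x|$ enters the computation, and assumption (\textbf{A}) forces this to be bounded above — in fact it tends to $-\infty$. The only technical nuisance I anticipate is the possible non-differentiability of $t\mapsto|\varphi^n_t|$ at zeros of $\varphi^n$, which is the reason for passing through the regularization $\rho_\delta$.
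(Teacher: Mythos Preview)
Your argument is correct, but it follows a genuinely different route from the paper. The paper expands the square in $I^{x_n}_{T_n}(\varphi^n)$, drops the nonnegative $|c(\varphi^n)|^2$ term, and recognizes the two cross terms as exact: $\int_0^{T_n}\langle\nabla V(\varphi^n_t),\dot\varphi^n_t\rangle\,dt=V(y_n)-V(x_n)$ and $\tfrac{2}{T_n}\int_0^{T_n}\langle b(\varphi^n_t),\dot\varphi^n_t\rangle\,dt=q_n$ by the constraint. Since $V$ is bounded below and $q_n$ is bounded, this yields directly $\int_0^{T_n}|\dot\varphi^n_t|^2\,dt\le C_2T_n$, whence $|y_n-x_n|\le C_3T_n$ by Cauchy--Schwarz. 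Your approach instead keeps $\dot\varphi^n-c(\varphi^n)$ intact, controls its $L^1$-norm, and runs a radial differential inequality using only the confinement $\langle x,c(x)\rangle\le C_2|x|$. The paper's proof exploits the specific structure of the problem (the orthogonal decomposition \eqref{c=}, the constraint $\mc L_{T_n}(\varphi^n)=q_n$, and the gradient form of $\nabla V$) and delivers the slightly stronger intermediate bound on $\int|\dot\varphi^n|^2$; your argument is more robust, since it uses neither the decomposition nor the constraint and would go through for any drift $c$ with $\langle x,c(x)\rangle/|x|$ bounded above.
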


\begin{proof}
  Let $\{x_n\}\subset \bb R^n$, $T_n\to\infty$, $q_n\to q$ and
  $\varphi^n\in A^{x_n}_{T_n}(q_n)$ be as in the statement of
  Proposition~\ref{t:allungo}.  Since $\{x_n\}$ is bounded, by
  Lemma~\ref{t:4.1.5} there exists a constant $C_1>0$ independent of
  $n$ such that for any $n$ large enough $I^{x_n}_{T_n} (\varphi^n)
  \le C_1 T_n$.  Hence, by expanding the square in the definition
  \eqref{4.3} of $I^{x_n}_T$,
  \begin{eqnarray*}
    C_1 &\ge & \frac 1{T_n} \int_0^{T_n}\!dt \, 
    \Big[ \frac 12 | { \dot{\varphi} }^n_t  |^2 
    - \big\langle b(\varphi^n_t), { \dot{\varphi} }^n_t \big\rangle 
    + \frac 12
    \big\langle \nabla V(\varphi^n_t), { \dot{\varphi} }^n_t \big\rangle 
    \Big]
    \\
    & = & 
    \frac 1{2 T_n} \int_0^{T_n}\!dt \, | { \dot{\varphi} }^n_t  |^2  
    - \frac {q_n}2 +  \frac {V(y_n) - V(x_n)}{2 T_n} \;.
  \end{eqnarray*}
  Since $\{x_n\}$ is bounded and $\lim_{|x|\to\infty} V(x)=+\infty$,
  from the above bound it follows there exists a constant $C_2>0$
  independent of $n$ such that $\int_0^{T_n}\!dt \, | { \dot{\varphi}
  }^n_t |^2 \le C_2 T_n$. This yields $|y_n-x_n| \le C_3 T_n$ for some
  $C_3>0$ and concludes the proof.
\end{proof}

\begin{lemma}
  \label{t:arrivo}
  Let $\upbar{x}^y_t$, $t\in\bb R_+$, be the solution to
  $\,\dot{\!\upbar{x}} = c(\upbar{x})$ with initial condition
  $\upbar{x}_0=y$. There exists a compact $K\subset \bb R^n$
  independent of $y$ such that the following statement holds.  Denote
  by $\sigma_K(y) := \inf\{t \ge 0 \,:\, \upbar{x}^y_t \in K \}$ the
  hitting time of $K$; then
  \begin{equation*}
  \lim_{|y|\to\infty} \frac{\sigma_K(y)}{|y|} = 0 \;.
  \end{equation*}
\end{lemma}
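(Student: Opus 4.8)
The plan is to track the Euclidean norm $|\upbar{x}^y_t|$ along the deterministic flow and to exploit the strong inward drift of $c$ at infinity. First I would quantify this: writing $c=-\frac12\nabla V+b$ as in \eqref{c=} and using the boundedness of $b$ from assumption (\textbf{B}) together with Cauchy--Schwarz,
\begin{equation*}
  \frac{\langle c(x),x\rangle}{|x|}
  \le -\frac12\,\frac{\langle\nabla V(x),x\rangle}{|x|} + \sup_{z\in\bb R^n}|b(z)| \;,
\end{equation*}
which by assumption (\textbf{A}) tends to $-\infty$ as $|x|\to\infty$. Hence there is $R_1>0$ with $\langle c(x),x\rangle\le-|x|$ for $|x|\ge R_1$ and, for every $A\ge1$, a radius $R_A\ge R_1$ with $\langle c(x),x\rangle\le-A|x|$ for $|x|\ge R_A$. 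I would then take $K:=\{x\in\bb R^n:|x|\le R_1\}$, a ball depending only on the vector field $c$.

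Next I would monitor $\rho(t):=|\upbar{x}^y_t|$, which satisfies $\rho(t)\dot\rho(t)=\langle\upbar{x}^y_t,c(\upbar{x}^y_t)\rangle$ wherever $\rho(t)>0$. Since the right-hand side is $\le0$ for $\rho(t)\ge R_1$, the flow $\upbar{x}^y$ stays bounded and is thus globally defined; moreover $\dot\rho(t)\le-1$ whenever $\rho(t)\ge R_1$ and, more strongly, $\dot\rho(t)\le-A$ whenever $\rho(t)\ge R_A$. In particular $\rho$ decreases strictly until the flow enters $K$, so $\sigma_K(y)<\infty$ for every $y$. To estimate the hitting time, fix $\epsilon>0$, choose $A\ge1$ with $A^{-1}\le\epsilon/2$ and the corresponding $R_A$, and take $|y|>R_A$. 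I would split the descent of $\rho$ into two phases: the bound $\dot\rho\le-A$ drives $\rho$ down to the level $R_A$ within time $(|y|-R_A)/A\le(\epsilon/2)|y|$, after which the bound $\dot\rho\le-1$ drives $\rho$ down to the level $R_1$ within a further time at most $R_A-R_1\le R_A$. Hence $\sigma_K(y)\le(\epsilon/2)|y|+R_A$, so $\sigma_K(y)/|y|<\epsilon$ as soon as $|y|>2R_A/\epsilon$; since $\epsilon>0$ is arbitrary, $\lim_{|y|\to\infty}\sigma_K(y)/|y|=0$.

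The step that needs the most care is that $K$ must be chosen once and for all, independently of the accuracy $\epsilon$. The ``very strongly inward'' region $\{|x|\ge R_A\}$ does depend on $\epsilon$, but it enters only the first-phase estimate; the compact $K$ stays equal to the fixed ball $\{|x|\le R_1\}$, and it is precisely the bounded-time second phase, governed by the $\epsilon$-independent bound $\dot\rho\le-1$, that makes the flow reach this fixed $K$ rather than the $\epsilon$-dependent ball $\{|x|\le R_A\}$. Apart from this point, the argument is a routine scalar comparison for $\rho$, which I would expect to write out in only a few lines.
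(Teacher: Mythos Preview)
Your proposal is correct and follows essentially the same route as the paper: both arguments reduce the problem to tracking the radial coordinate $\rho(t)=|\upbar{x}^y_t|$, use assumption (\textbf{A}) together with the boundedness of $b$ to get $\dot\rho\to-\infty$ at infinity, and deduce that the hitting time of a fixed ball is $o(|y|)$. The only cosmetic difference is that the paper packages the scalar comparison via a single ODE $\dot r=-\ell(r)+B$ and the integral $\int_{R_0}^{|y|}dr\,(\ell(r)-B)^{-1}$, whereas you use a two-phase piecewise-constant bound on $\dot\rho$; both yield the result with the compact $K$ fixed once and for all.
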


\begin{proof}
  By assumption (\textbf{A}) there exists a Lipschitz function $\ell
  :\bb R_+ \to \bb R$ such that $\ell(r)\to +\infty$ as $r\to \infty$
  and for any $x\in \bb R^n$ $\big\langle \nabla V(x), x\big\rangle
  \ge 2\, \ell (|x|) \, |x|$.  Set $B:= \sup_{x} |b(x)|$ and let
  $R_0\in\bb R_+$ be such that $\ell(r) \ge B +1$ for any $r\ge R_0$.
  We claim the lemma holds with $K$ given by the ball of radius $R_0$.
  Indeed, recalling \eqref{c=} and using a standard comparison
  argument, if $|y|> R_0$ we have that $|\upbar{x}^y_t| \le r_t$ where
  $r_t$ solves the Cauchy problem
\begin{equation*}
  \begin{cases}
    \dot r = -  \ell(r) + B \;,\\
    r_0  = |y| \;.
  \end{cases}
\end{equation*}
Hence, 
\begin{equation*}
  \lim_{|y|\to\infty}  \frac{\sigma_K(y)}{|y|} \le 
  \lim_{|y|\to\infty}  \frac 1{|y|} 
  \int_{R_0}^{|y|}\!dr \,  \frac 1{\ell(r) -B} = 0
\end{equation*}
since $\ell(r)\to \infty$ as $r\to \infty$. 
\end{proof}

\begin{proof}[Proof of Proposition~\ref{t:allungo}]
Let $K\subset \bb R^n$ be as in Lemma~\ref{t:arrivo} and denote by
$\sigma_n\ge 0$ the hitting time of $K$ for the path $\upbar{x}$ with
initial condition $y_n$. Observe that,
by Lemmata~\ref{t:yn} and \ref{t:arrivo}, $\lim_n \sigma_n/ T_n=0$.
Set $z_n:= \upbar{x}_{\sigma_n}\in K$ and 
let $q^{(1)}_n := 2 \sigma_n^{-1} \int_0^{\sigma_n}\!dt\, 
\big|b(\upbar{x}_t)\big|^2$ be the power dissipated by vector field $b$
along the path $\upbar{x}_t$, $t\in[0,\sigma_n]$. Note that
$q^{(1)}_n$ is bounded by the boundedness of $b$. 

Choose a sequence ${\tilde \sigma}_n\to \infty$ such that
${\tilde \sigma}_n /T_n \to 0$, $\sup_n \sigma_n/{\tilde \sigma}_n
<\infty$, and $\sup_n (q-q_n)T_n/(\sigma_n+{\tilde \sigma}_n) <\infty$.
Set $\tau_n:= \sigma_n + {\tilde\sigma}_n$ and $q^{(2)}_n:=
{\tilde\sigma}_n^{-1} \big( \tau_n {\widehat q}_n -\sigma_n q^{(1)}_n\big)$.
Observe that $q^{(2)}_n$ is bounded by the boundedness of
$q^{(1)}_n$ and the choice of ${\tilde \sigma}_n$.  Let $\psi^n_t$, $t\in
[0,{\tilde\sigma}_n]$ be the path constructed in Lemma~\ref{t:4.1.5},
see in particular \eqref{toppath}, with $x$ replaced by $z_n$, $y$
replaced by $x_n$, $q$ replaced by $q^{(2)}_n$, and $T$ replaced by
${\tilde\sigma}_n$.

Finally, define the path $\gamma^n$ going form $y_n$ to $x_n$ in the
time interval $[0,\tau_n]$ by $\gamma^n_t:= \id_{[0,\sigma_n)}(t)\,
\upbar{x}_t + \id_{[\sigma_n,\tau_n]}(t) \, \psi^n_{t-\sigma_n}$. 
Then, by construction, 
\begin{equation*}
  \mc L_{\tau_n} (\gamma^n) 
  = \frac 1{\tau_n} \big[ \sigma_n q^{(1)}_n +{\tilde \sigma}_n 
   \mc L_{{\tilde\sigma}_n} (\psi^n) \big]
\end{equation*}
and 
\begin{equation*}
  I^{y_n}_{\tau_n}(\gamma^n) = 0 + I^{z_n}_1(\zeta^{z_nz}) 
  + N I^z_{\lambda^{-1}}(\xi^\lambda) + I^{z}_1(\zeta^{zx_n})  
  + \frac {T_1}2 \, |c(x_n)|^2 \;.   
\end{equation*}
Since $\{z_n\}\subset K$ and $\{x_n\}$, $\{q^{(2)}_n\}$ are bounded, we
conclude the proof by choosing sequences $\{\lambda_n\}\subset \bb
R_+$, $\{N_n\}\subset \bb N$, and $\{p_n\} \subset [-|\bar q|, |\bar
q|]$ as in Lemma~\ref{t:4.1.5}. Note in particular that with such
choices $\mc L_{{\tilde\sigma}_n} (\psi^n) = q^{(2)}_n$, $\lambda_n$
is bounded, and $\lim_n N_n/T_n=0$.
\end{proof}

\section{An example with not strictly convex rate function}
\label{s:ea}

We here given a simple example a two-dimensional vector field $c$
such that the rate function $s$ is not strictly convex.  Let
$U:\bb R_+ \to \bb R_+$ be a smooth function with two local
minima at $0$ and $R_0>0$ and super-linear growth as $r\to\infty$.  Set
$V(x) := U(|x|)$. Let also  the smooth vector field $b :\bb R^2 \to
\bb R^2$ be defined by $b(x) = A(|x|) x^\perp= A(|x|) \, (x^2,-x^1)$ 
where $A:\bb R_+\to \bb R_+$ is a smooth function with compact support in
$(0,+\infty)$ such that $A(R_0)>0$. 
Set finally $c(x):= -(1/2)\nabla V (x) + b(x)$.  Recalling $s$
has been defined in \eqref{4.5}, we claim that in this case it is not
strictly convex.

Consider the dynamical system
\begin{equation*}
  \dot x = -\frac 12 \nabla V(x) + b(x)\;.
\end{equation*}
By the above choices, it has the equilibrium solution $x=0$ and the
periodic solution $\bar x (t) = R_0 \big(\cos(\omega t), \sin(\omega
t)\big)$ where $\omega= A(R_0)/R_0$.  Let 
\begin{equation*}
  \bar q := \frac {2 \omega}{2 \pi} \int_0^{2\pi/\omega}\!dt \:
  \big\langle b\big(\bar x (t)\big), \dot{\bar x} (t) \big\rangle 
\end{equation*}
be the power dissipated along the periodic solution $\bar x$. Note that
$\bar q >0$.
By choosing the test paths $\varphi=0$ and $\varphi=\bar x$ we
deduce $s(0)=s(\bar q) =0$. Therefore, by the positivity and convexity
of $s$, we have $s(q)=0$ for any $q\in[0,\bar q]$. Moreover, the
fluctuation theorem $s(q) -s(-q) = -q$ implies that $s(q) = -q$ for
$q\in[-\bar q, 0]$.

\bigskip
\subsection*{Acknowledgements}
It is a pleasure to thank D.~Benedetto for his collaboration at an
initial stage of this work and D.~Fiorenza for his help on the
topological Lemma~\ref{t:4.1.5}. G.\ Di Ges\`u thanks M.~Klein and
K.-T.~Sturm for interesting discussions. The warm
hospitality and financial support of the University of Rome ``La Sapienza''
and of the IRTG-SMCP at TU Berlin are also acknowleged by the second author.

\end{document}